\documentclass[11pt]{article}
\usepackage{amsmath, amscd, amssymb, latexsym, epsfig, color, amsthm,enumerate, mathtools}
\usepackage[all]{xy}
\usepackage{verbatim}
\setlength{\textwidth}{6.5in}
\setlength{\textheight}{8.6in}
\setlength{\topmargin}{0pt}
\setlength{\headsep}{0pt}
\setlength{\headheight}{0pt}
\setlength{\oddsidemargin}{0pt}
\setlength{\evensidemargin}{0pt}
\flushbottom
\pagestyle{plain}

\numberwithin{equation}{section}

\newtheorem{theorem}{Theorem}[section]

\newtheorem{question}[theorem]{Question}
\newtheorem{corollary}[theorem]{Corollary}
\newtheorem{conjecture}[theorem]{Conjecture}
\newtheorem{lemma}[theorem]{Lemma}

\theoremstyle{definition}
\newtheorem{definition}[theorem]{Definition}
\newtheorem{example}[theorem]{Example}

\newtheorem{remark}[theorem]{Remark}

\DeclareMathOperator{\skel}{Skel}
\DeclareMathOperator\lk{\mathrm{lk}}
\DeclareMathOperator\antst{\mathrm{ast}}

\DeclareMathOperator\sn{\mathrm{sn}}

\DeclareMathOperator{\conv}{\mathrm{conv}}

\newcommand{\N}{{\mathbb N}}
\newcommand{\R}{{\mathbb R}}

\newcommand{\A}{{\mathcal A}}
\newcommand{\Qq}{{\mathcal Q}}

\newcommand{\I}{{\mathcal I}}

\newcommand{\Ss}{{\mathcal S}}
\newcommand{\T}{{\mathcal T}}
\newcommand{\F}{{\mathcal F}}
\newcommand{\PP}{{\mathcal P}}

\title{Many neighborly spheres}
\author{
	Isabella Novik\\
	\small Department of Mathematics\\[-0.8ex]
	\small University of Washington\\[-0.8ex]
	\small Seattle, WA 98195-4350, USA\\[-0.8ex]
	\small \texttt{novik@uw.edu}
	\and 
	Hailun Zheng \\
	\small Department of Mathematical Sciences\\[-0.8ex]
	\small University of Copenhagen\\[-0.8ex]
	\small Universitesparken 5, 2100 Copenhagen, Denmark \\[-0.8ex]
	\small \texttt{hz@math.ku.dk}
}
\begin{document}
\maketitle
\begin{abstract}
The result of Padrol \cite{Padrol-13} asserts that for every $d\geq 4$, there exist $2^{\Omega(n\log n)}$ distinct combinatorial types of $\lfloor d/2\rfloor$-neighborly simplicial $(d-1)$-spheres with $n$ vertices. We present a construction showing that for every $d\geq 5$, there are at least $2^{\Omega(n^{\lfloor (d-1)/2\rfloor})}$ such types.
\end{abstract}

\subsection*{Acknowledgements} Research of IN is partially 
		supported by NSF grants DMS-1664865 and DMS-1953815, and by Robert R.~\&  Elaine F.~Phelps Professorship in Mathematics. Research of HZ is partially supported by a postdoctoral fellowship from ERC grant 716424 - CASe. The authors are grateful to the referee for several clarifying questions.
		
\subsection*{MSC} 52B05,  52B70, 57Q15 

\section{Introduction}
A simplicial complex on $n$ vertices is $s$-neighborly if it has the same $(s-1)$-skeleton as the $(n-1)$-simplex on the same vertex set. 
Of special interest are  $\lfloor d/2\rfloor$-neighborly  $(d-1)$-spheres. They arise, for instance, in the context of Stanley's upper bound theorem \cite{Stanley75}. In this paper we address the question of how many  $\lfloor d/2\rfloor$-neighborly $(d-1)$-spheres with $n$ vertices there are.

This question is ultimately related to the questions of how many combinatorial types of (convex) simplicial $d$-polytopes with $n$ labeled vertices there are and how many combinatorial types of simplicial $(d-1)$-spheres with $n$ labeled vertices there are. Denote these numbers by $c(d,n)$ and $s(d,n)$, respectively. 
The asymptotic answer to the first question was given by Goodman and Pollack \cite{GoodmanPollack} followed by the work of Alon \cite{Alon-86}. They showed that there are very few polytopes: $c(d,n)=2^{\Theta(n\log n)}$ for $d\geq 4$.
In contrast to these results, Kalai \cite{Kal} proved that there is a very large number of simplicial spheres: for $d\geq 5$, $s(d,n)\geq 2^{\Omega(n^{\lfloor (d-1)/2\rfloor})}$. Furthermore, Pfeifle and Ziegler \cite{PfeiZieg} showed that $s(4,n)\geq 2^{\Omega(n^{5/4})}$.   The current record on the number of odd-dimensional simplicial spheres is due to Nevo, Santos, and Wilson \cite{NeSanWil} who established the following bound:  $s(2k,n)\geq 2^{\Omega(n^{k})}$  for all $k\geq 2$. In short, the best to-date lower bound for any $d\geq 4$ is $s(d,n)\geq 2^{\Omega(n^{\lfloor d/2\rfloor})}.$ On the other hand, Stanley's upper bound theorem implies that $s(d, n)\leq 2^{O(n^{\lfloor d/2\rfloor}\log n)}$ (see \cite[Section 4.2]{Kal}). This is the current best upper bound.

Despite the fact that most of spheres constructed in \cite{Kal, NeSanWil, PfeiZieg} are not neighborly, Kalai \cite[Section 6.3]{Kal} speculated that the number $\sn(d,n)$ of $\lfloor d/2\rfloor$-neighborly simplicial $(d-1)$-spheres with $n$ labeled vertices is very large and posited the following conjecture.
\begin{conjecture}For all $d\geq 4$,
	$$\lim_{n\to\infty} (\log \sn(d,n)/ \log s(d,n))=1.$$ 
\end{conjecture}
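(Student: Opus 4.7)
The plan is to construct, for each $d\geq 4$ and large $n$, a family of $2^{n^{\lfloor d/2 \rfloor}/\mathrm{polylog}(n)}$ distinct $\lfloor d/2 \rfloor$-neighborly $(d-1)$-spheres on $n$ labeled vertices, matching Stanley's upper bound on $s(d,n)$ up to the polylogarithmic slack. The approach mimics the large-family constructions underlying the known lower bounds for $s(d,n)$---Kalai's squeezed balls, Pfeifle--Ziegler, Nevo--Santos--Wilson---but equips them with a neighborliness-preserving modification procedure.

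First I would fix a neighborly ``backbone'' sphere, say $\partial C(d,N)$ for a cyclic polytope on $N$ vertices, and locate a collection of $M$ vertex-disjoint local substructures (``gadgets''), each of bounded size, each admitting two admissible fillings. The key local move is a bistellar $i$-flip with $i\geq \lceil d/2 \rceil$: such a flip preserves the $(\lfloor d/2 \rfloor -1)$-skeleton of the ambient complex, hence $\lfloor d/2 \rfloor$-neighborliness. Choosing any subset of gadgets to flip then produces $2^M$ candidate neighborly spheres.

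Three things must be verified: (i) each candidate is a simplicial sphere, which follows because bistellar flips preserve PL-sphericity; (ii) neighborliness is stable under simultaneous flips of disjoint gadgets, which reduces to a local check in each gadget since the gadgets share no vertices; and (iii) the labeled combinatorial type determines the subset of gadgets that were flipped, which I would arrange by engineering a simple local invariant---say, the isomorphism type of the pair $(\lk(v),\antst(v))$ at a designated vertex of each gadget---that distinguishes the two fillings.

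The hardest part is combining (iii) with producing $M$ of the right order. A direct search inside $\partial C(d,N)$ yields only $O(N^{\lfloor(d-1)/2\rfloor})$ disjoint flippable gadgets, because nearby facets of cyclic polytopes share most of their vertex set; this already suffices for the weaker bound $\sn(d,n)\geq 2^{\Omega(n^{\lfloor(d-1)/2\rfloor})}$ announced in the abstract, but falls short of the conjecture by a factor of $n$ in the exponent. Closing the gap would require either a fundamentally different backbone (with looser facet-overlap behaviour) or an iterative construction in which each stage introduces a fresh independent batch of flippable positions without destroying the neighborliness accumulated so far. It is the delicate tension between density of available gadgets and rigidity of the neighborliness condition that, I suspect, has kept the full conjecture out of reach.
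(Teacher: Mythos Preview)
The statement you are attempting to prove is a \emph{conjecture} (attributed to Kalai) and is not proved in the paper. The paper is explicit about this: immediately after stating the conjecture it says ``While we are still very far from being able to shed light on Kalai's conjecture, we improve Padrol's bound and prove the following result,'' and then states and proves only the weaker Theorem~1.2, namely $\sn(d,n)\geq 2^{\Omega(n^{\lfloor (d-1)/2\rfloor})}$ for $d\geq 5$. There is therefore no ``paper's own proof'' to compare your attempt against.

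Your proposal is not a proof either, and you say so yourself in the final paragraph: the gadget count you can actually justify gives the exponent $n^{\lfloor(d-1)/2\rfloor}$, not $n^{\lfloor d/2\rfloor}$, and you identify the remaining factor of $n$ as the obstacle that ``has kept the full conjecture out of reach.'' That is an honest assessment, but it means the document is a discussion of why the problem is hard, not a proof. There is also an internal inconsistency worth flagging: you first require the gadgets to be \emph{vertex-disjoint} (to make the distinguishing-invariant argument and independence of flips clean), but vertex-disjoint gadgets of bounded size can number at most $O(n)$, not $O(n^{\lfloor(d-1)/2\rfloor})$. To reach even the weaker exponent you would need gadgets that merely have disjoint \emph{interiors} in a suitable sense, and then both the independence of the flips and the injectivity of the labeling-to-subset map require a genuinely new argument that your sketch does not supply.

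For comparison, the paper's route to the weaker bound is quite different from your bistellar-flip idea: it builds ``relative squeezed balls'' $B_\Ss = B(\Ss)\setminus B(\Ss-\mathbf{1}_{2k})$ inside $\partial C_{2k}(n)$, proves they are $(k-1)$-neighborly and $(k-1)$-stacked, and then either takes their boundaries (odd $d$) or cones off the hole with a new vertex (even $d$). The large count comes from counting antichains in a poset, not from independent local flips.
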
 
Indeed, the currently best known lower bound on the number of $\lfloor d/2\rfloor$-neighborly $d$-polytopes with $n$ labeled vertices (due to Padrol \cite{Padrol-13}) is also the best known  lower bound on the total number of combinatorial types of $d$-polytopes with $n$ labeled vertices.  Padrol's paper built on and generalized  Shemer's sewing construction \cite{Shemer}, which was used to produce the previous record number of neighborly polytopes.  In addition to neighborly polytopes, Padrol was also able to construct a record number of non-realizable neighborly oriented matroids. Yet, Padrol's bounds only imply that $\sn(d,n) \geq 2^{\Omega(n\log n)}$.

While we are still very far from being able to shed light on Kalai's conjecture, we improve Padrol's bound and prove the following result.

\begin{theorem} \label{main-thm}
For all $d\geq 5$, $\sn(d,n) \geq 2^{\Omega(n^{\lfloor (d-1)/2\rfloor})}$.
\end{theorem}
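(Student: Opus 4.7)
Set $s:=\lfloor d/2\rfloor$. The plan is a local-modification construction: fix an $s$-neighborly seed $(d-1)$-sphere $\Sigma_0$ on $m=\Theta(n)$ vertices with many combinatorially distinguished ``sites'', and at each site make an independent binary choice that alters $\Sigma_0$ locally but preserves global $s$-neighborliness. A natural seed is $\Sigma_0=\partial C_d(m)$, the boundary of the cyclic polytope, since it is already $s$-neighborly and its facets are completely controlled by Gale's evenness condition. The remaining $n-m$ vertices will be introduced one-per-site by the local moves.

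The local move I envisage is a binary, parallelizable variant of Shemer's sewing. A single Shemer move removes a small $(d-1)$-ball $B\subset\Sigma_0$ whose interior contains a ``universal'' facet of $\Sigma_0$, and glues back an $s$-neighborly $(d-1)$-ball $B'$ with $\partial B'=\partial B$ containing one new interior vertex $v$; because $v$ is placed beyond a universal facet, every $(s-1)$-subset of old vertices is completed to an $s$-face through $v$, so $s$-neighborliness is preserved. To extract one bit at each site I would produce, for each chosen $B_i\subset\Sigma_0$, two inequivalent $s$-neighborly replacements $B_i^0,B_i^1$ with common boundary $\partial B_i$, each inserting one new interior vertex. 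For each choice $\varepsilon=(\varepsilon_1,\dots,\varepsilon_N)\in\{0,1\}^N$ one then obtains a candidate $s$-neighborly $(d-1)$-sphere $\Sigma(\varepsilon)$; distinctness of the $\Sigma(\varepsilon)$ is read off from local invariants, for instance the link of the inserted vertex near each modified site.

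The key combinatorial input is a family of $N=\Omega(n^{\lfloor(d-1)/2\rfloor})$ sites whose modifications are genuinely independent. Since $\partial C_d(m)$ has $\Theta(n^{\lfloor d/2\rfloor})$ facets, a greedy or probabilistic selection should furnish at least this many sites whose supports in $\Sigma_0$ are pairwise far enough apart that the moves commute. The principal obstacle, however, is certifying $s$-neighborliness after the moves are performed in parallel: every $s$-subset of the final vertex set must be a face, including subsets containing several newly inserted vertices. A single Shemer move handles subsets involving one new vertex for free, but subsets spanning several inserted vertices require the sites to share a common combinatorial backbone of universal faces of $\Sigma_0$. I expect the proof to hinge on choosing the sites so that they all sit above a single fixed universal facet (or a small cluster thereof), reducing the compatibility check to a uniform local computation inside $\partial C_d(m)$; this parallel-compatibility analysis, rather than the counting of sites or the distinguishability argument, is where the bulk of the work will lie.
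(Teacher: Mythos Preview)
Your proposal has a fatal accounting mismatch. You start with a seed on $m=\Theta(n)$ vertices, say that ``the remaining $n-m$ vertices will be introduced one-per-site,'' and then ask for $N=\Omega(n^{\lfloor(d-1)/2\rfloor})$ sites, each of whose two options $B_i^0,B_i^1$ inserts one new interior vertex. For $d\ge 5$ the exponent $\lfloor(d-1)/2\rfloor\ge 2$, so $N$ is super-linear in $n$; performing all $N$ moves therefore adds $\Omega(n^{2})$ new vertices, not $n-m$. If instead you keep $N=n-m=\Theta(n)$ sites so that the vertex count is correct, you only get $2^{\Theta(n)}$ spheres, which does not beat Padrol. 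Any attempt to repair this by letting only some sites fire (one option adds a vertex, the other does not) runs into the same arithmetic: choosing $\Theta(n)$ active sites out of $\Theta(n^{k-1})$ yields only $2^{O(n\log n)}$ spheres. Beyond the vertex count, the piece you correctly flag as ``where the bulk of the work will lie''---showing that every $\lfloor d/2\rfloor$-subset containing several newly inserted vertices is a face---is not just hard bookkeeping; with disjoint local patches there is no mechanism that forces two new vertices from far-apart sites to be joined by an edge, let alone to span the required $(\lfloor d/2\rfloor-1)$-faces. Parallel Shemer sewing as described cannot produce the bound.

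For comparison, the paper does \emph{not} perform many small local moves. It performs a single move of the sewing type (Lemma~4.1), replacing one large $(k-1)$-stacked, $(k-1)$-neighborly $(2k-1)$-ball $B\subset\partial C_{2k}(n)$ by $\partial B*\overline{\{n+1\}}$, adding exactly one vertex. The entire count comes from the number of such balls $B$: the paper builds the family $B_\Ss=B(\Ss)\setminus B(\Ss-\mathbf{1}_{2k})$ of ``relative squeezed balls'' indexed by antichains $\Ss$ in Kalai's poset $\F_{2k}^{[1,n]}$, proves each $B_\Ss$ has the required stackedness and neighborliness (this is the technical core, Theorem~3.1), and then invokes Kalai's antichain count to get $2^{\Omega(n^{k-1})}$ distinct balls and hence that many $k$-neighborly $(2k-1)$-spheres on $n+1$ vertices. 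The odd-dimensional case $d=2k-1$ simply takes $\partial B_\Ss$. So the exponential comes from a rich family of large replacement balls, not from independent binary bits at many sites.
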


Our construction utilizes Kalai's squeezed balls \cite{Kal}. In fact, the key to our proof is an observation that for certain choices of parameters, the difference of two squeezed $(2k-1)$-balls on $n$ vertices forms a $(k-1)$-neighborly and $(k-1)$-stacked $(2k-1)$-ball on the same vertex set, see Theorem~\ref{thm: prop of B_S}. These ``difference" balls are contained in the boundary complex of the cyclic $2k$-polytope on $n$ vertices, denoted as $\partial C_{2k}(n)$. They are extremely useful for our constructions in both even- and odd-dimensional cases. Indeed, on one hand, the boundary of such a ball $B$ is a $(k-1)$-neighborly $(2k-2)$-sphere on $n$ vertices. On the other hand, removing $B$ from $\partial C_{2k}(n)$ and patching the resulting hole with the cone over the boundary of $B$ produces a $k$-neighborly $(2k-1)$-sphere on $n+1$ vertices. 

A few historical remarks are in order. The first construction of polytopal $d$-balls with $n$ vertices that are both $r$-neighborly and $r$-stacked (for all parameters $r,d,n$ with $2\leq 2r\leq d$ and $n \geq d+1$) is due to McMullen and Walkup \cite{McMullenWalkup71}. The idea of finding inside a triangulated manifold $M$ a full-dimensional ball $B$ that is both $1$-neighborly (i.e., $B$ contains all vertices of $M$) and $1$-stacked, and using such balls to construct $2$-neighborly triangulations of manifolds was pioneered by Walkup \cite{Walkup}; for a much more recent use of the same idea see \cite[Section 5]{Swartz2009}.
 
The structure of the rest of the paper is as follows. In Section 2 we review basic definitions related to neighborliness, stackedness, and Kalai's squeezed balls. In Section 3 we describe our main construction, the relative squeezed balls. Finally, in Section 4 we prove Theorem \ref{main-thm}.

\section{Preliminaries} 
\subsection{Simplicial complexes} We start by providing a quick overview of the main objects of this paper --- simplicial complexes.
A \emph{simplicial complex} $\Delta$ with vertex set $V(\Delta)$ is a collection of subsets of $V(\Delta)$ that is closed under inclusion and contains all singletons: $\{v\}\in\Delta$ for all $v\in V(\Delta)$. The elements of $\Delta$ are called {\em faces}. The \emph{dimension of a face} $\tau\in\Delta$ is $\dim\tau:=|\tau|-1$. The \emph{dimension of $\Delta$}, $\dim\Delta$, is the maximum dimension of its faces. A face of a simplicial complex $\Delta$ is a \textit{facet} if it is maximal w.r.t.~inclusion. We say that $\Delta$ is \emph{pure} if all facets of $\Delta$ have the same dimension. We distinguish between the {\em empty complex} $\Delta=\{\emptyset\}$ whose only face is the empty set and the {\em void complex} $\Delta=\emptyset$ that has no faces (not even the empty set).

Let $\Delta$ be a simplicial complex. The \emph{$k$-skeleton} of $\Delta$, $\skel_k(\Delta)$, is the subcomplex of $\Delta$ consisting of all faces of dimension $\leq k$. If $\tau$ is a face of $\Delta$, then the {\em antistar of $\tau$} and 
the  {\em link of $\tau$ in $\Delta$} are the following subcomplexes of $\Delta$:
\[\antst(\tau, \Delta)=\{\sigma\in \Delta: \sigma \not\supseteq \tau\}, \quad 
\lk(\tau,\Delta):= \{\sigma\in \Delta \ : \ \sigma\cap\tau=\emptyset \mbox{ and } \sigma\cup\tau\in\Delta\}.
\]
If $\Delta$ is a pure simplicial complex and $\Gamma$ is a full-dimensional pure subcomplex of $\Delta$, then $\Delta\backslash \Gamma$ is the subcomplex of $\Delta$ generated by those facets of $\Delta$ that are not in $\Gamma$. Finally, if $\Delta$ and $\Gamma$ are simplicial complexes on disjoint vertex sets, then the \textit{join} of $\Delta$ and $\Gamma$ is the simplicial complex $\Delta*\Gamma = \{\sigma \cup \tau \ : \ \sigma \in \Delta \text{ and } \tau \in \Gamma\}$. In particular, the join of $\Delta$ with the empty complex is $\Delta$ while the join of $\Delta$ with the void complex is the void complex.

Let $V$ be a set of size $d+1$. Denote by $\overline{V}$ the $d$-dimensional simplex on $V$. Its boundary complex is $\partial\overline{V}:=\{\tau : \tau\subsetneq V\}$. Most of complexes considered in this paper are PL balls or PL spheres. (Here PL stands for piecewise linear.) A \emph{PL $d$-ball} is a simplicial complex PL homeomorphic to $\overline{V}$. Similarly, a \emph{PL $(d-1)$-sphere} is a simplicial complex PL homeomorphic to $\partial\overline{V}$. If $\Delta$ is a PL $d$-sphere and $\Gamma\subset\Delta$ is a PL $d$-ball, then so is $\Delta\backslash \Gamma$, see \cite{Hudson}. Furthermore, the link of any face in a PL sphere is a PL sphere. On the other hand, the link of a face $\tau$ in a PL $d$-ball $B$ is either a PL ball or a PL sphere; in the former case we say that $\tau$ is  a \emph{boundary face} of $B$, and in the latter case that $\tau$ is an \emph{interior face} of $B$. The \emph{boundary complex} of $B$, $\partial B$, is the subcomplex of $B$ that consists of all boundary faces of $B$; in particular, $\partial B$ is a PL $(d-1)$-sphere.

For a $(d-1)$-dimensional complex $\Delta$ and $-1\leq i\leq d-1$, we let $f_i = f_i(\Delta)$ be the number of $i$-dimensional faces of $\Delta$. The vector $f(\Delta)=(f_{-1},f_0,\ldots,f_{d-1})$ is called the {\em $f$-vector} of $\Delta$. We also define the \textit{$h$-vector} of $\Delta$, $h(\Delta)=(h_0,\ldots, h_d)$, by the following relation: $\sum_{j=0}^{d}h_j\lambda^{d-j}=\sum_{i=0}^{d}f_{i-1}(\lambda-1)^{d-i}$. In particular,  $f_{-1}=h_0=1$ and $f_{d-1}=\sum_{j=0}^{d}h_j$.

\subsection{Cyclic polytopes, neighborliness, and stackedness}
Let $i\geq 1$. We say that a simplicial complex $\Delta$ is \emph{$i$-neighborly} w.r.t.~$V$ (or simply $i$-neighborly) if $\skel_{i-1}(\Delta)=\skel_{i-1}(\overline{V})$.

Let $m: \R^d \to \R$, $t \mapsto (t, t^2, \dots, t^d)$, be the \emph{moment curve} in $\R^d$, and let $t_1<t_2<\dots <t_n$ be distinct real numbers, where $n>d$. The \emph{cyclic $d$-polytope} $C_d(n)$ is defined as the convex hull $\conv(m(t_1), \dots, m(t_n))$.  It is known that $C_d(n)$ is a simplicial $d$-polytope with $n$ vertices, that it is $\lfloor d/2\rfloor$-neighborly and that its combinatorial type is independent of the choice of $t_1,\ldots,t_n$. In the rest of the paper we treat the boundary complex of $C_d(n)$, $\partial C_d(n)$, as an abstract simplicial complex. In particular, we identify a vertex $m(t_i)$ with $i\in [n]:=\{1,2,\ldots, n\}$ and the vertex set of $\partial C_d(n)$ with $[n]$. The facets of $C_d(n)$ have a particularly nice description known as Gale's evenness condition \cite{Gale63}: 

\begin{lemma}
	Let $n>d \geq 2$, and  let $C_d(n)$ be the cyclic $d$-polytope. A $d$-subset $F\subset [n]$ forms a facet of $\partial C_d(n)$ if and only if for every $i<j$ not in $F$, the number of elements $\ell\in F$ between $i$ and $j$ is even. In particular, for $d=2k$, $F=\{i_1, i_1+1, i_2, i_2+1, \dots, i_k, i_k+1\}$ is a facet of $\partial C_{2k}(n)$ if $1\leq i_1$, $i_k\leq n-1$, and $i_{j}\leq i_{j+1}-2$ for all $1\leq j\leq k-1$.
\end{lemma}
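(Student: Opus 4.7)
The plan is to prove Gale's evenness via the standard moment-curve polynomial trick. For any $d$-subset $F \subset [n]$, I would introduce the polynomial $p_F(t) = \prod_{i \in F}(t - t_i)$ of degree $d$; writing $p_F(t) = a_0 + a_1 t + \cdots + a_d t^d$, the linear equation $a_0 + a_1 x_1 + \cdots + a_d x_d = 0$ cuts out an affine hyperplane $H_F \subset \R^d$, and by construction a point $m(s)$ on the moment curve lies on $H_F$ precisely when $p_F(s) = 0$, i.e., precisely when $s = t_i$ for some $i \in F$. Since $\{m(t_i) : i \in F\}$ is affinely independent (a Vandermonde computation), $H_F$ is the unique affine hyperplane spanned by these points.

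Given this setup, $F$ is a facet of $\partial C_d(n)$ if and only if $H_F$ is a supporting hyperplane of $C_d(n)$; and since no $m(t_j)$ with $j \notin F$ lies on $H_F$, this is in turn equivalent to requiring that the nonzero values $\{p_F(t_j) : j \notin F\}$ share a common sign. Because $t_1 < \cdots < t_n$, the sign of $p_F(t_j) = \prod_{i \in F}(t_j - t_i)$ equals $(-1)^{|\{i \in F \,:\, t_i > t_j\}|}$, so for $j < j'$ both outside $F$ these signs agree if and only if the number of indices $i \in F$ strictly between $j$ and $j'$ is even. This is exactly Gale's evenness condition, completing the ``iff'' half.

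For the second claim about $d = 2k$, I would simply check that any $F = \{i_1, i_1+1, \dots, i_k, i_k+1\}$ meeting the stated hypotheses satisfies Gale's condition. The constraints $1 \leq i_1$, $i_k \leq n-1$, and $i_j + 2 \leq i_{j+1}$ guarantee that $F$ is a $2k$-subset of $[n]$ consisting of $k$ disjoint pairs. Taking any $i < j$ with $i, j \notin F$, a single element of a pair $\{i_s, i_s+1\}$ could lie in the open interval $(i, j)$ only if $i$ or $j$ coincided with $i_s$ or $i_s + 1$, contradicting $i, j \notin F$. Hence each pair contributes either $0$ or $2$ to the count of elements of $F$ strictly between $i$ and $j$, so this count is even. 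The only substantive step is the initial identification of $H_F$ with the zero locus of $p_F$ on the moment curve; once that polynomial description is in hand, the rest is a routine sign chase and a combinatorial bookkeeping.
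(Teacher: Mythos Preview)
Your argument is correct and is the standard textbook proof of Gale's evenness condition (see, e.g., Ziegler's \emph{Lectures on Polytopes}). Note, however, that the paper does not supply its own proof of this lemma: it is stated as a known fact with a citation to Gale's 1963 paper, so there is nothing to compare against. Your write-up would serve perfectly well as the missing proof; the only tiny polish I would suggest is to make explicit that ``all $p_F(t_j)$, $j\notin F$, share a common sign'' is equivalent to ``every pair $j<j'$ outside $F$ yields the same sign,'' which you are implicitly using when you reduce to the pairwise parity check.
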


A property that seems to be inseparable from neighborliness is that of stackedness. A PL $d$-ball $B$ is called \emph{$i$-stacked} (for some $0\leq i\leq d$), if all interior faces of $B$ are of dimension $\geq d-i$, that is, $\skel_{d-i-1}(B)=\skel_{d-i-1}(\partial B)$. In particular, $0$-stacked balls are simplices; $1$-stacked balls are also known in the literature as stacked balls. We will rely on the following basic properties, see for instance \cite[Lemma 2.2]{N-Z}. Part 2 of the statement below is stronger than the one provided in \cite[Lemma 2.2(2)]{N-Z}, but the proof is identical, so we omit it.
\begin{lemma}\label{lm: stackedness}
	Let $B_1$ and $B_2$ be PL balls of dimension $d_1$ and $d_2$, respectively. If $B_1$ is $i_1$-stacked and $B_2$ is $i_2$-stacked, then
	\begin{enumerate}
		\item The complex $B_1*B_2$ is an $(i_1+i_2)$-stacked PL $(d_1+d_2+1)$-ball. 
		\item Furthermore, if $d_1=d_2=d$ and  $B_1\cap B_2\subseteq \partial B_1\cap \partial B_2$ is a PL $(d-1)$-ball that is $i_3$-stacked, then $B_1\cup B_2$ is an $i$-stacked PL $d$-ball, where $i=\max\{i_1, i_2, i_3+1\}$.
	\end{enumerate}
\end{lemma}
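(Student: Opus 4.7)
The plan in each part is to characterize the interior faces of the resulting PL ball and then read the required stackedness bound directly off their dimensions. Recall that a face of a PL ball is interior precisely when its link is a PL sphere, and $i$-stackedness asserts that every interior face has dimension at least $d-i$.

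For part~(1), the join $B_1*B_2$ is a PL $(d_1+d_2+1)$-ball by a standard PL fact, and its links satisfy $\lk_{B_1*B_2}(\sigma_1\cup\sigma_2)=\lk_{B_1}(\sigma_1)*\lk_{B_2}(\sigma_2)$. Since a join of complexes is a PL sphere exactly when each factor is a PL sphere, and since $\lk_{B_j}(\emptyset)=B_j$ is a ball rather than a sphere, the interior faces of $B_1*B_2$ are precisely the disjoint unions $\sigma_1\cup\sigma_2$ in which each $\sigma_j$ is interior in $B_j$. The bounds $|\sigma_j|\ge d_j-i_j+1$ combine to give $\dim(\sigma_1\cup\sigma_2)\ge(d_1+d_2+1)-(i_1+i_2)$, which is the required $(i_1+i_2)$-stackedness.

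For part~(2), $B_1\cup B_2$ is a PL $d$-ball by the standard PL gluing theorem (two PL $d$-balls attached along a PL $(d-1)$-ball in their common boundaries form a PL $d$-ball). The interior faces split by location. If $\sigma\in B_1\setminus B_2$, then $\lk_{B_1\cup B_2}(\sigma)=\lk_{B_1}(\sigma)$, so interiority in the union is equivalent to interiority in $B_1$, forcing $\dim\sigma\ge d-i_1$; the case $\sigma\in B_2\setminus B_1$ is symmetric and gives $\dim\sigma\ge d-i_2$. For $\sigma\in B_1\cap B_2\subseteq\partial B_1\cap\partial B_2$, one has $\lk_{B_1\cup B_2}(\sigma)=\lk_{B_1}(\sigma)\cup\lk_{B_2}(\sigma)$ with intersection $\lk_{B_1\cap B_2}(\sigma)$. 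The plan is to show that in this case $\sigma$ is interior in $B_1\cup B_2$ if and only if it is interior in the PL $(d-1)$-ball $B_1\cap B_2$, which forces $\dim\sigma\ge(d-1)-i_3=d-(i_3+1)$. Combining the three bounds yields $\dim\sigma\ge d-\max\{i_1,i_2,i_3+1\}$, as required.

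The main obstacle is the nontrivial direction of this last equivalence: assuming $\lk_{B_1\cap B_2}(\sigma)$ is a PL $(d-\dim\sigma-2)$-sphere, one must show that $\lk_{B_1}(\sigma)\cup\lk_{B_2}(\sigma)$ is a PL sphere. Each $\lk_{B_j}(\sigma)$ is a PL ball whose boundary equals $\lk_{\partial B_j}(\sigma)$, and $\lk_{B_1\cap B_2}(\sigma)$ sits inside each such boundary sphere as a pure codimension-zero subcomplex that is itself a PL sphere. Because a pure codimension-zero subcomplex of a connected PL sphere that is itself a closed PL manifold must be the whole sphere, one concludes $\lk_{B_1\cap B_2}(\sigma)=\partial\lk_{B_1}(\sigma)=\partial\lk_{B_2}(\sigma)$, whence $\lk_{B_1}(\sigma)\cup\lk_{B_2}(\sigma)$ is a union of two PL balls glued along a common boundary PL sphere and is therefore a PL sphere. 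This is the one step that invokes PL topology beyond formal link manipulations; everything else is bookkeeping on dimensions.
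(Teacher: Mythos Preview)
The paper does not actually prove this lemma; it cites \cite{N-Z} and remarks that the proof there goes through verbatim. So there is no approach to compare against, and I simply assess your argument. Your treatment of Part~1 is correct.

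In Part~2, however, you have argued the wrong implication. To bound the dimension of an interior face $\sigma\in B_1\cap B_2$ of $B_1\cup B_2$ via the $i_3$-stackedness of $B_1\cap B_2$, what you need is
\[
\sigma \text{ interior in } B_1\cup B_2 \ \Longrightarrow\ \sigma \text{ interior in } B_1\cap B_2,
\]
but the paragraph you label ``the main obstacle'' proves only the converse. The needed direction is not harder, yet it requires its own argument: since $\lk_{B_1}(\sigma)$ and $\lk_{B_2}(\sigma)$ are PL balls sharing no facets (the intersection $B_1\cap B_2$ being $(d-1)$-dimensional) and their union is the PL sphere $\lk_{B_1\cup B_2}(\sigma)$, each is the complement of the other in that sphere, and hence their intersection $\lk_{B_1\cap B_2}(\sigma)$ equals the common boundary $\partial\lk_{B_1}(\sigma)=\partial\lk_{B_2}(\sigma)$, a PL sphere. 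Equivalently, one may first identify
\[
\partial(B_1\cup B_2)=(\partial B_1\setminus C)\cup(\partial B_2\setminus C),\qquad C:=B_1\cap B_2,
\]
and then observe that $C\cap\partial(B_1\cup B_2)=\partial C$ (because $C$ is a full-dimensional PL ball inside each sphere $\partial B_j$); this yields both directions of the equivalence at once. With this correction your proof is complete.
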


We close this subsection with the following theorem that summarizes a few properties of the $h$-vectors of PL balls.

\begin{theorem} \label{h-vectors-properties}
Let $\Delta$ be a PL $(d-1)$-ball with $n$ vertices. Then
\begin{enumerate}
\item The $h$-numbers of $\Delta$ satisfy $0\leq h_i \leq \binom{n-d+i-1}{i}$ for all $1\leq i \leq d$.
\item $\Delta$ is $i_0$-neighborly if and only if $h_{i}(\Delta)=\binom{n-d+i-1}{i}$ for all $i\leq i_0$.
\item $\Delta$ is $(r-1)$-stacked if and only if $h_i(\Delta)=0$ for all $i\geq r$.
\end{enumerate}
\end{theorem}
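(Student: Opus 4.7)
The plan is to derive all three parts from the Cohen-Macaulay property of PL balls together with standard manipulations of $h$-vectors.

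For part 1, I would observe that a PL $(d-1)$-ball $\Delta$ is Cohen-Macaulay over every field $\field$: by Reisner's criterion it suffices to check the correct reduced-homology vanishing of all links, which is automatic because every link in $\Delta$ is itself a PL ball or a PL sphere. Taking a linear system of parameters $\theta_1,\dots,\theta_d$ of generic linear forms (assuming $\field$ infinite, which we may), the Artinian reduction $A:=\field[\Delta]/(\theta_1,\dots,\theta_d)$ is standard graded with $\dim_\field A_i = h_i(\Delta)$; in particular $h_i\geq 0$. Since $\dim_\field A_1=h_1=n-d$ and $A$ is generated as a $\field$-algebra by $A_1$, $A$ is a graded quotient of the polynomial ring $\field[y_1,\dots,y_{n-d}]$, whence $h_i(\Delta)\leq\binom{n-d+i-1}{i}$.

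For part 2, I would invert the usual $h$-to-$f$ relation, $f_{i-1}(\Delta)=\sum_{j=0}^{i}\binom{d-j}{i-j}h_j(\Delta)$, and use the identity $\sum_{j=0}^{i}\binom{d-j}{i-j}\binom{n-d+j-1}{j}=\binom{n}{i}$ (an easy induction on $i$, or a Vandermonde-Chu manipulation). Since the coefficient matrix $\binom{d-j}{i-j}$ is upper unitriangular, the equalities $h_j(\Delta)=\binom{n-d+j-1}{j}$ for all $j\le i_0$ are equivalent to $f_{j-1}(\Delta)=\binom{n}{j}$ for all $j\le i_0$, and the latter is by definition the condition $\skel_{i_0-1}(\Delta)=\skel_{i_0-1}(\overline{V})$, i.e., $i_0$-neighborliness.

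For part 3, I would invoke the Dehn-Sommerville relations for Cohen-Macaulay balls. Defining the interior $h$-vector of $\Delta$ by $\sum_{j=0}^{d}h_j^{\mathrm{int}}(\Delta)\,\lambda^{d-j}=\sum_{i=0}^{d}f_{i-1}^{\mathrm{int}}(\Delta)\,(\lambda-1)^{d-i}$, where $f_{i-1}^{\mathrm{int}}$ counts interior $(i-1)$-faces of $\Delta$, one has the duality $h_j^{\mathrm{int}}(\Delta)=h_{d-j}(\Delta)$ for all $0\le j\le d$. Now $(r-1)$-stackedness says precisely that $f_{i-1}^{\mathrm{int}}(\Delta)=0$ for all $i\le d-r$, so the right-hand side of the defining relation is a polynomial in $\lambda$ of degree at most $r-1$; hence $h_j^{\mathrm{int}}(\Delta)=0$ for all $j\le d-r$, and by the duality $h_i(\Delta)=0$ for $i\ge r$. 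Every implication reverses, yielding the desired equivalence.

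The main technical point requiring external input is the duality $h_j^{\mathrm{int}}(\Delta)=h_{d-j}(\Delta)$, which is a classical consequence of the Cohen-Macaulayness of the relative Stanley-Reisner module of the ball pair $(\Delta,\partial\Delta)$ -- equivalently, of the Gorenstein duality in the local cohomology of $\field[\Delta]$ supported on the interior face ideal. Once this duality is granted, the remaining manipulations are elementary, so I would either invoke the standard reference (e.g., Stanley's \emph{Combinatorics and Commutative Algebra}, or Gr\"abe's work) or, in a purely combinatorial treatment, derive it from Macaulay's local duality for CM pseudo-manifolds with boundary.
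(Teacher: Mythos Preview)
Your proposal is correct. The paper does not actually prove Theorem~\ref{h-vectors-properties}: it simply cites Stanley~\cite{Stanley75,Stanley96} for parts~1--2 (noting these hold for all Cohen--Macaulay complexes) and McMullen~\cite[Proposition~2.4]{Mc-04} for part~3. Your arguments are essentially the standard ones found in those references: the Artinian-reduction argument for part~1 is exactly Stanley's; the triangular change of basis in part~2 is the routine unwinding of the $h$-to-$f$ relation; and your part~3, via the interior $h$-vector and the duality $h_j^{\mathrm{int}}(\Delta)=h_{d-j}(\Delta)$ for homology balls, is one of the standard routes (this duality is indeed the key external input, as you note, and appears in Gr\"abe's work and in Stanley's book).
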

The first two statements are due to Stanley \cite{Stanley75}, see also \cite[Chapter II.3]{Stanley96}; they hold not only for balls but for all Cohen--Macaulay complexes. For the last statement, see  \cite[Proposition 2.4]{Mc-04}.

\subsection{Kalai's squeezed balls and spheres}
To review the definition of squeezed balls and spheres, we will use some terminology from partially ordered sets. Specifically, recall that an {\em antichain} $\A$ in a poset $(\Qq, \leq)$ is a subset of $\Qq$ no two of which elements are comparable to each other. If $\A \subseteq \Qq$ is an antichain, we denote by $\Qq(\A)$ the \emph{order ideal} (also known as the \emph{initial set}) generated by $\A$: $\Qq(\A)=\{x\in \Qq: x\leq a \; \text{for some}\; a\in \A\}$. When $\Qq$ is finite, there is a natural bijection $\phi$ from the set of antichains of $\Qq$ to the set of order ideals of $\Qq$  defined by $\phi(\A)=\Qq(\A)$; the inverse map $\phi^{-1}$ takes an order ideal $\I$ to the set of maximal elements of $\I$.

In what follows, every $d$-subset of $[n]$ is written in an increasing order and is identified with an element of $\N^d$. In particular, we compare two $d$-subsets using the standard \emph{partial order} $\leq_p$ on $\N^d$:  for $F=\{i_1, i_2,\dots, i_d\}$ and $G=\{j_1, j_2, \dots, j_d\}$, we say that $G\leq_p F$ if $j_\ell\leq i_\ell$ for all $1\leq \ell\leq d$. Denote by $\mathbf{1}_{d}$ the all-ones vector of length $d$. We also say that $G\prec_p F$ if $G\leq_p F-\mathbf{1}_d$. 

Let $[m,n]$ denote the set $\{m, m+1,\ldots,n\}$. (It is the empty set if $m>n$.) Our main construction relies on the poset $\F_{2k}^{[m, n]}$ defined as follows.  When $k=0$, this poset consists only of the empty set. When $k\geq 1$, as a set  $\F_{2k}^{[m, n]}$ consists of the following facets of the cyclic polytope $C_{2k}(n)$:
$$\{\{i_1, i_1+1, i_2, i_2+1, \dots, i_k, i_k+1\}:\; m\leq i_1, \;i_k\leq n-1, \;i_j\leq i_{j+1}-2, \; \forall\; 1\leq j\leq k-1\};$$ these facets are ordered by the partial order $\leq_p$. Note that if $m>n-2k+1$, then $\F_{2k}^{[m, n]}$ is a void poset. Otherwise, $[n-2k+1, n]=\{n-2k+1,n-2k+2,\ldots,n\}$ is the unique maximal element of $\F_{2k}^{[m, n]}$, that is, $\F_{2k}^{[m, n]}$ is the order ideal generated by the antichain $\{[n-2k+1,n]\}$.  

When $k$ and $n$ are fixed or understood from context, we abbreviate $\F_{2k}^{[1, n]}$ as $\F_{2k}$ or as $\F$. We say that two antichains $\Ss$ and $\T$ in $\F$ satisfy $\T\leq_p \Ss$ if $\F(\T)\subseteq \F(\Ss)$; equivalently, if for every $G\in\T$ there is an element $F\in\Ss$ such that $G\leq_p F$. Similarly, we say that
$\T\prec_p \Ss$ if for every element $G\in \T$, there exists an element $F\in \Ss$ such that $G\prec_p F$. For instance, if $k=2$, $n=8$, $\Ss=\{\{1,2,7,8\}, \{3,4,6,7\}\}$, $\T=\{\{2,3,5,6\}\}$, and $\T'=\{\{1,2,6,7\}\}$, then $\T\prec_p \Ss$, $\T' \leq_p \Ss$, but $\T' \not\prec_p \Ss$.

For an antichain $\Ss$ in $\F$, let $B(\Ss)$ be the pure simplicial complex whose facets are the sets in the order ideal $\F(\Ss)$. (In particular, $B(\Ss)\neq B(\T)$ if $\Ss\neq \T$.) For example, if $k=2$ and $\Ss=\{\{1,2,5,6\}, \{2,3,4,5\}\}$, then the complex $B(\Ss)$ is a 3-ball with facets $\{1,2,3,4\}$, $\{1,2,4,5\}$, $\{1,2,5,6\}$ and $\{2,3,4,5\}$.

Kalai \cite{Kal} proved that the complexes $B(\Ss)$ are PL balls and called them \emph{squeezed balls}. The boundary complex $\partial B(\Ss)$ of $B(\Ss)$ is a \emph{squeezed sphere}. Some of the properties of these objects are summarized in the following theorem. We refer to \cite{Ziegler} for the definition of shellability and only mention a known fact that shellable balls and spheres are always PL.

\begin{theorem}\label{lm: prop of squeezed balls}
	Fix $k$ and $n$, and let $\Ss, \Ss'$ be non-empty antichains in $\F=\F_{2k}^{[1,n]}$. Then
	\begin{enumerate}
		\item $B(\Ss)$ is a $k$-stacked shellable $(2k-1)$-ball. Furthermore, if $\Ss=\{[n-2k+1, n]\}$, then $B(\Ss)$ is $k$-neighborly w.r.t.~$[n]$.
		\item If $\partial B(\Ss)=\partial B(\Ss')$, then $B(\Ss)=B(\Ss')$.
	\end{enumerate}
	\end{theorem}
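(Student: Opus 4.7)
For Part 1, I plan to produce an explicit shelling of $B(\Ss)$. Order the facets of $\F(\Ss)$ by any linear extension of $\leq_p$ (for instance, lex on the tuples $(i_1,\dots,i_k)$), and for each $F = \{i_1,i_1+1,\dots,i_k,i_k+1\}$ in $\F(\Ss)$, claim the restriction set is
\[R(F) = \{i_\ell+1 \,:\, 1 \leq \ell \leq k,\; i_\ell \geq 2\ell\}.\]
The verification analyzes each codim-$1$ face $F \setminus \{v\}$. When $v = i_\ell$, the only pair-facet extensions of $F \setminus \{v\}$ other than $F$ replace $i_\ell$ with $i_\ell + 2$ (possibly via a rightward cascade), all yielding facets $>_p F$ and hence later in the shelling. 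When $v = i_\ell + 1$, an earlier pair-facet $F' <_p F$ arises via a leftward cascade of some contiguous block of pairs ending at position $\ell$; working through the pair constraints shows such a cascade exists iff $i_\ell \geq 2\ell$. I expect the main obstacle to be this case analysis, since the cascade can propagate through several adjacent pairs when consecutive gaps $i_{\ell'}-i_{\ell'-1}$ all equal $2$. Once $R(F)$ is established, $|R(F)|\le k<2k$ certifies that the shelling produces a PL ball, and the shelling identity $h_i(B(\Ss))=\#\{F\in\F(\Ss):|R(F)|=i\}$ yields $h_i=0$ for $i>k$, so $B(\Ss)$ is $k$-stacked by Theorem~\ref{h-vectors-properties}(3).

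For the $k$-neighborliness assertion when $\Ss=\{[n-2k+1,n]\}$, I would directly verify that every $k$-subset $T=\{t_1<\dots<t_k\}\subseteq[n]$ lies in some pair-facet of $\F_{2k}^{[1,n]}$. Decompose $T$ into maximal runs of consecutive integers; each such run of length $r$ contributes $\lceil r/2 \rceil$ internal pairs (padding with one neighbor when $r$ is odd), and the remaining pairs needed to reach $k$ are inserted into the gaps between runs or at the end, always respecting $j_{\ell+1}\ge j_\ell+2$ and $j_k+1\le n$. This produces a valid facet $F\in\F_{2k}^{[1,n]}$ containing $T$.

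For Part 2, my plan is to reconstruct $B(\Ss)$ from $\partial B(\Ss)$ by leveraging the embedding $B(\Ss)\subseteq\partial C_{2k}(n)$. Since $\partial B(\Ss)$ is a PL $(2k-2)$-sphere inside the PL $(2k-1)$-sphere $\partial C_{2k}(n)$, it separates the latter into two PL $(2k-1)$-balls, namely $B(\Ss)$ and $\partial C_{2k}(n)\setminus B(\Ss)$. In the dual graph of $\partial C_{2k}(n)$---vertices are facets, edges are shared $(2k-2)$-faces---deleting the edges corresponding to codim-$1$ faces lying in $\partial B(\Ss)$ therefore yields exactly two connected components, one per ball. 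The component corresponding to $B(\Ss)$ is distinguished by the fact that all of its facets are pair-facets, i.e.\ lie in $\F_{2k}^{[1,n]}$, whereas $\partial C_{2k}(n)$ contains non-pair facets (for instance $\{1,2,\dots,2k-1,n\}$ whenever $n>2k$, as verified by Gale's evenness condition) which must lie in the complementary component. Hence $\partial B(\Ss)$ determines $B(\Ss)$, and $\Ss$ is recovered as the antichain of $\leq_p$-maximal facets of $B(\Ss)$.
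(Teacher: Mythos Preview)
Your plan is sound and the arguments are correct, but your route differs from the paper's in both parts. For Part~1, the paper does not redo the shelling: it simply cites Kalai for shellability and then derives $k$-stackedness via the embedding $B(\Ss)\subseteq \antst(n+1,\partial C_{2k}(n+1))$, observing that the boundary of this antistar is $\partial C_{2k-1}(n)$, which is $(k-1)$-neighborly, so every face of dimension $\le k-2$ in the antistar is a boundary face; $k$-stackedness of $B(\Ss)$ then follows because interior faces of $B(\Ss)$ are interior in the antistar. The $k$-neighborliness of $B(\{[n-2k+1,n]\})$ falls out of the same identification with the antistar and the $k$-neighborliness of $C_{2k}(n+1)$. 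Your approach instead reproduces Kalai's explicit shelling and reads off $h_i=0$ for $i>k$ directly from $|R(F)|\le k$; this is more self-contained and gives the $h$-vector for free, while the paper's antistar trick is shorter and more conceptual once shellability is taken as known. Your cascade analysis for $R(F)$ is correct: the leftward shift through a maximal block of adjacent pairs succeeds unless the block begins at position~$1$ with $i_1=1$, which is exactly the condition $i_\ell<2\ell$. For Part~2, the paper again just cites Kalai, whereas you give a genuine argument: $\partial B(\Ss)$ separates $\partial C_{2k}(n)$ into two PL balls, and $B(\Ss)$ is the side whose facets are all pair-facets, the other side necessarily containing a non-pair facet such as $\{1,\dots,2k-1,n\}$. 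This works cleanly for $n>2k$; the edge case $n=2k$ is trivial since $\F_{2k}^{[1,2k]}$ has a single element.
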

\noindent Part 2 is \cite[Proposition 3.3]{Kal}, and a large portion of Part 1 is proved in \cite[Corollary 3.2 and Proposition 5.3(i)]{Kal}. For completeness, we discuss some of the details of the proof of Part 1 below.

\begin{proof}
By \cite[Corollary 3.2]{Kal}, $B(\Ss)$ is a shellable $(2k-1)$-ball. By Gale's evenness condition, the elements of $\mathcal{F}=\F_{2k}(\{[n-2k+1,n]\})$ are precisely the facets of $C_{2k}(n+1)$ that do not contain $n+1$. Hence $B(\{[n-2k+1,n]\})$ is $k$-neighborly w.r.t.~$[n]$ (because $C_{2k}(n+1)$ is $k$-neighborly w.r.t.~$[n+1]$) and
$$B(\Ss)\subseteq B(\{[n-2k+1, n]\})=\antst(n+1, \partial C_{2k}(n+1)).$$ Gale's evenness condition also implies that $\lk(n+1, \partial C_{2k}(n+1))= \partial C_{2k-1}(n)$. Consequently, $\lk(n+1, \partial C_{2k}(n+1))$ is a $(k-1)$-neighborly (w.r.t.~$[n]$) $(2k-2)$-sphere. Since this sphere is the boundary complex of $\antst(n+1, \partial C_{2k}(n+1))$, all faces of $\antst(n+1, \partial C_{2k}(n+1))$ of dimension $\leq k-2$ are boundary faces. We conclude that $\antst(n+1, \partial C_{2k}(n+1))$ is a $k$-neighborly w.r.t.~$[n]$ and $k$-stacked $(2k-1)$-ball. Finally, since $B(\Ss)$ is a full-dimensional subcomplex of this ball, an interior face of $B(\Ss)$ is necessarily an interior face of the antistar.  Thus $B(\Ss)$ is also $k$-stacked. 
\end{proof}

To count the number of distinct squeezed $(2k-2)$-spheres, we define another poset $$\PP=\PP_k^n=\{(x_1, x_2, \dots, x_k): 1\leq x_1< x_2< \dots <x_k\leq n-k\}\subseteq \N^k$$
also ordered by the partial order $\leq _p$. There is a natural bijection $R$ between $\PP$ and $\F$ given by 
$$ R: (x_1, x_2, \dots, x_k)\mapsto \{x_1, x_1+1, x_2+1, x_2+2, x_3+2, x_3+3,\dots, x_k+k-1, x_k+k\}.$$
This map is an isomorphism of posets. Counting the number of distinct antichains in $\PP$ leads to
\begin{theorem} {\rm \cite[Theorem 4.2]{Kal}}\label{thm: counting squeezed balls}
	Let $k\geq 2$. The number of combinatorial types of squeezed $(2k-1)$-balls with $n$ labeled vertices (or equivalently, those of squeezed $(2k-2)$-spheres with $n$ labeled vertices) is $2^{\Omega(n^{k-1})}$.
\end{theorem}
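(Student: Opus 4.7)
By Theorem~\ref{lm: prop of squeezed balls}(2), the assignments $\Ss \mapsto B(\Ss)$ and $\Ss \mapsto \partial B(\Ss)$ are both injective on non-empty antichains of $\F = \F_{2k}^{[1,n]}$. Since $R$ is an isomorphism of posets, antichains in $\PP_k^n$ correspond to antichains in $\F$, and so it suffices to exhibit $2^{\Omega(n^{k-1})}$ distinct non-empty antichains in $\PP := \PP_k^n$. The plan is to produce a single antichain $A \subseteq \PP$ of cardinality $\Omega(n^{k-1})$; then each of the $2^{|A|}$ subsets of $A$ is itself an antichain, which already yields the desired lower bound.

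To construct $A$, the idea is to consider the level sets of the coordinate sum $\sigma(x) := x_1 + \cdots + x_k$. Any level set $A_s := \{x \in \PP : \sigma(x) = s\}$ is automatically an antichain: if $x, y \in A_s$ satisfy $x \leq_p y$, then the differences $y_i - x_i \geq 0$ sum to zero, forcing $x = y$. The total size of $\PP$ is $\binom{n-k}{k} = \Theta(n^k)$ (with $k$ fixed), while $\sigma$ takes values in the interval $\bigl[\binom{k+1}{2},\, k(n-k) - \binom{k}{2}\bigr]$, whose length is $O(n)$. A pigeonhole argument will therefore yield some $s$ with $|A_s| \geq |\PP|/O(n) = \Omega(n^{k-1})$, and $A := A_s$ completes the construction.

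I do not expect genuine obstacles here: the only piece of bookkeeping is to confirm that $\sigma$ ranges over an interval of length linear in $n$, which is immediate from the constraints $1 \leq x_1 < \cdots < x_k \leq n-k$. Once $A_s$ is in hand, its $2^{|A_s|} = 2^{\Omega(n^{k-1})}$ subsets supply the required number of distinct antichains in $\PP$, and hence, via $R$ and Theorem~\ref{lm: prop of squeezed balls}(2), the required number of combinatorial types of squeezed $(2k-1)$-balls and squeezed $(2k-2)$-spheres on $n$ labeled vertices.
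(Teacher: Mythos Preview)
The paper does not give its own proof of this theorem; it simply states the result with a citation to Kalai~\cite{Kal}, after noting that it amounts to counting antichains in $\PP$. Your argument is correct and is the standard one for that count: level sets of the coordinate sum $\sigma$ are antichains, $|\PP|=\binom{n-k}{k}=\Theta(n^{k})$ while $\sigma$ takes only $O(n)$ values, so pigeonhole produces an antichain $A_s$ of size $\Omega(n^{k-1})$, and its $2^{|A_s|}$ subsets furnish $2^{\Omega(n^{k-1})}$ distinct antichains, hence (via $R$ and Theorem~\ref{lm: prop of squeezed balls}) that many distinct labeled squeezed balls and spheres. One minor point worth noting: an arbitrary subset of $A_s$ need not yield a squeezed ball that uses \emph{all} of $[n]$ as vertices, so the phrase ``with $n$ labeled vertices'' should be read as ``on vertex set contained in $[n]$''; this is harmless for the asymptotics and matches how the paper actually uses the result (see the proof of Theorem~\ref{thm: counting relative squeezed balls}, which invokes it purely as a bound on the number of antichains).
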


\section{The relative squeezed balls and spheres}
In this section, we introduce and study the main objects of the paper --- relative squeezed balls. For an antichain $\Ss$ in $\F=\F_{2k}^{[1,n]}$, let
\[
\Ss-\mathbf{1}_{2k}:=\{x-\mathbf{1}_{2k}: x=\{x_1, x_1+1, x_2, x_2+1, \dots, x_k, x_k+1\}\in \Ss, \;x_1>1\},
\]
and define $B_{\Ss}:=B(\Ss)\backslash B(\Ss - \mathbf{1}_{2k})$ to be the difference of two squeezed balls. The goal of this section is to prove the following result that parallels Theorem \ref{lm: prop of squeezed balls}:

\begin{theorem}\label{thm: prop of B_S}
	Let $\Ss, \Ss'$ be non-empty antichains in $\F=\F_{2k}^{[1,n]}$. Then
	\begin{enumerate}
		\item The complex $B_\Ss$ is a $(k-1)$-stacked PL $(2k-1)$-ball.
		Furthermore, if $\Ss$ contains $[1,2]\cup [n-2k+3, n]$ as an element, then $B_\Ss$ has $n$ vertices and is $(k-1)$-neighborly.
		\item If $\partial B_\Ss=\partial B_{\Ss'}$, then $B_\Ss=B_{\Ss'}$.
	\end{enumerate} 
\end{theorem}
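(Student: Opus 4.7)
The plan is to prove Part~1 by showing that $B_\Ss$ is a shellable PL $(2k-1)$-ball via an explicit shelling, and then to deduce $(k-1)$-stackedness by bounding the sizes of the shelling restrictions; the neighborliness and vertex-count claim is then handled by exhibiting explicit facets of $B_\Ss$. For Part~2, I exploit the key observation (highlighted in the introduction) that $B_\Ss \subseteq \partial C_{2k}(n)$: since each facet $\{i_1,i_1+1,\dots,i_k,i_k+1\}\in \F$ satisfies Gale's evenness condition for the cyclic $2k$-polytope on $[n]$, the complex $B_\Ss$ embeds in the $(2k-1)$-sphere $\partial C_{2k}(n)$.

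For the shelling, I would order the facets of $B_\Ss$ by a carefully chosen linear extension of $\leq_p$, designed so that each newly added facet intersects the previously added ones in a pure $(2k-2)$-dimensional subcomplex of its boundary. Small cases suggest an order such as ``sort by $a_1$ ascending, breaking ties by $a_2$ descending'' (with an appropriate generalization for $k\geq 3$). For each facet $F=\{a_1,a_1+1,\dots,a_k,a_k+1\}$, the restriction $R(F)$ should be describable in terms of which coordinates of $F$ are ``new'' relative to earlier facets; I expect $|R(F)|\leq k-1$ always, because the facets of $B_\Ss$ form the ``top layer'' of $B(\Ss)$ with respect to $\prec_p$ (none is strictly dominated by another element of $\Ss$), leaving at most $k-1$ coordinates free to contribute. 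Theorem~\ref{h-vectors-properties}(3) then yields $(k-1)$-stackedness from $h_i(B_\Ss)=0$ for $i\geq k$.

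For the neighborliness claim, note that if $\Ss\ni x^*:=[1,2]\cup[n-2k+3,n]$ then $x^*$ corresponds to $(1,n-2k+3,n-2k+5,\dots,n-1)$, whose coordinates $a_j$ for $j\geq 2$ attain the global maximum $n-2k+2j-1$ in $\F$; hence $x^*$ cannot be strictly dominated by any element of $\Ss$ and is a facet of $B_\Ss$. More generally, any facet $F=(1,a_2,\dots,a_k)\leq_p x^*$ in which some $a_j$ ($j\geq 2$) attains this maximum is automatically a facet of $B_\Ss$, since no tuple $y\in\F$ can satisfy $y_j>a_j$ in that coordinate. Using this family of facets one verifies that every vertex of $[n]$ and every $(k-2)$-subset is covered, establishing $(k-1)$-neighborliness. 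For Part~2, since $B_\Ss$ is a PL $(2k-1)$-ball inside the PL $(2k-1)$-sphere $\partial C_{2k}(n)$, the boundary $\partial B_\Ss$ splits $\partial C_{2k}(n)$ into two complementary PL balls; the $(k-1)$-stacked property together with the ambient sphere uniquely identifies $B_\Ss$, and then $\Ss$ is recovered from $B_\Ss$ as the antichain of $\leq_p$-maximal elements among $\{(a_1,\dots,a_k):\{a_1,a_1+1,\dots,a_k,a_k+1\}\text{ is a facet of }B_\Ss\}$, paralleling Theorem~\ref{lm: prop of squeezed balls}(2).

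The main technical obstacle I foresee is proving shellability and bounding the restriction sizes by $k-1$: since the facets of $B_\Ss$ form a set-theoretic difference of two order ideals in $\F$ rather than a single principal ideal, Kalai's original shelling of $B(\Ss)$ does not restrict to a shelling of $B_\Ss$ in an obvious way, and one must design a new shelling order. An alternative route is to invoke the PL-topology fact that removing an embedded sub-$d$-ball $Y$ from a PL $d$-ball $X$ yields a PL $d$-ball whenever $Y\cap\partial X$ is a PL $(d-1)$-ball, applied to the pair $(B(\Ss),B(\Ss-\mathbf{1}_{2k}))$ (noting that $B(\Ss-\mathbf{1}_{2k})\subseteq B(\Ss)$ since $F\prec_p x$ implies $F\leq_p x$); this still requires proving that $B(\Ss-\mathbf{1}_{2k})\cap \partial B(\Ss)$ is a PL $(2k-2)$-ball, which I would attempt by an induction on $|\Ss|$.
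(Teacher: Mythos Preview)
Your primary strategy---proving that $B_\Ss$ is shellable with restriction faces of size at most $k-1$---is precisely what the paper leaves as an \emph{open problem} (Question~4.5). The authors do not know whether $B(\Ss)\backslash B(\T)$ is shellable for general $\T\prec_p\Ss$, and they only establish constructibility (Remark~3.12). So the heart of your plan rests on something not known to hold; you correctly flag this as the main obstacle, but you offer no concrete mechanism for resolving it, and your suggested ordering (``$a_1$ ascending, $a_2$ descending'') is speculative. Your fallback---removing the sub-ball $B(\Ss-\mathbf{1}_{2k})$ from $B(\Ss)$ via PL topology---still requires identifying $B(\Ss-\mathbf{1}_{2k})\cap\partial B(\Ss)$ as a PL $(2k-2)$-ball, and that intersection is not obviously more tractable than the original problem; moreover, this route by itself does not give the $(k-1)$-stackedness you need.

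The paper takes a completely different route. It decomposes $B(\Ss,i)\backslash B(\T,i)$ as a union $\bigcup_j D_j$, where each $D_j$ is the join of a $2$-simplex $\overline{[j,j+1]}$ with a lower-dimensional object of the same type (Corollary~3.7), and it computes each intersection $D_j\cap D_{j+1}$ explicitly as a similar join (Lemma~3.10). Induction on $k$, together with Lemma~2.2 on the stackedness of joins and unions of balls, then gives that the full complex is a $(k-1)$-stacked PL ball (Lemma~3.11). For $(k-1)$-neighborliness the paper does not exhibit facets covering each $(k-1)$-subset as you propose; instead it counts the facets of $B_\Ss$ exactly as $\binom{n-k-1}{k-1}$ via a bijection (Lemma~3.13) and combines this with the vanishing $h_i=0$ for $i\geq k$ to force $h_i=\binom{n-2k+i-1}{i}$ for all $i\leq k-1$ (Lemma~3.14). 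Your idea for Part~2 is essentially the same as the paper's: once $(k-1)$-stackedness is established, uniqueness of a $(k-1)$-stacked ball with a given boundary (Bagchi--Datta/McMullen) immediately gives $B_\Ss=B_{\Ss'}$; the ambient embedding in $\partial C_{2k}(n)$ is not actually needed for this step.
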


In view of Theorem \ref{thm: prop of B_S}, we introduce the following terminology:

\begin{definition}
	Let $\Ss$ be a non-empty antichain in $\F=\F_{2k}^{[1,n]}$. The complex $B_\Ss$ is  called a \emph{relative squeezed $(2k-1)$-ball} defined by $\Ss$. The boundary complex $\partial B_\Ss$ is the \emph{relative squeezed $(2k-2)$-sphere} defined by $\Ss$. 
\end{definition}

To motivate this definition and Theorem \ref{thm: prop of B_S}, consider the following example: let $k=2$, $n=8$, and $\Ss=\{\{1,2,7,8\}, \{3,4,6,7\}\}$. Then $B(\Ss)$ is not $1$-stacked as it has too many facets (ten facets instead of five). On the other hand, $\Ss-\mathbf{1}_4=\{\{2,3,5,6\}\}$ and the facets of $B_{\Ss}$ consist of \[\{1,2,7,8\}, \{1,2,6,7\}, \{2,3,6,7\}, \{3,4,6,7\}, \,\text{and}\,\{3,4,5,6\}.\] The above order of facets of $B_{\Ss}$ shows that $B_{\Ss}$ is indeed a $1$-stacked $1$-neighborly (w.r.t.~$[8]$) $3$-ball.
		
The proof of Theorem \ref{thm: prop of B_S} requires quite a bit of preparation. To verify that $B_{\Ss}$ is a $(k-1)$-stacked PL ball, we will utilize Lemma \ref{lm: stackedness} along with inductive arguments on dimension. 
To this end, the elements of the form $[i, i']\cup [j, j']$ in an antichain of $\F^{[i, j']}$ will play a special role (e.g., notice the element $[1,2]\cup [n-2k+3,n]$ in the statement of Theorem \ref{thm: prop of B_S}) and the following definition will be indispensable.

\begin{definition}\label{main-def} Let $\Ss$ be an antichain in $\mathcal{F}=\F_{2k}^{[1,n]}$, let $1\leq \ell \leq k$, let $J=[j, j+2\ell-1]$ be a subset of $[n]$ of size $2\ell$, and let $m\geq 1$.
	\begin{itemize}
	\item Consider the following subcollection of $\F_{2(k-\ell)}^{[1, n]}$:
\[
\{H\subseteq [j+2\ell, n]:  J\cup H\in \F(\Ss)\}.
\]
Define $\Ss(J) \subseteq \F_{2(k-\ell)}^{[1, n]}$ to be the set of maximal elements of this collection. 

\item Define $\F(\Ss,m)=\F(\Ss)\cap \F^{[m,n]}$; that is, $\F(\Ss,m)$  is the collection of all sets $G$ in $\F(\Ss)$ such that the minimum of $G$ is at least $m$. Similarly, define $\F_{2(k-\ell)}(\Ss(J),m)$ as $\F_{2(k-\ell)}(\Ss(J)) \cap \F_{2(k-\ell)}^{[m,n]}$, where as before $\F_{2(k-\ell)}(\Ss(J))$ denotes the order ideal of $\F_{2(k-\ell)}^{[1,n]}$ generated by $\Ss(J)$.

\item 
Let $B(\Ss, m)$ be the pure simplicial complex whose collection of facets is $\F(\Ss,m)$. Similarly, let $B(\Ss(J),m)$ be the pure simplicial complex whose collection of facets is  $\F_{2(k-\ell)}(\Ss(J), m)$. In particular, $B(\Ss,1)=B(\Ss)$ and $B(\Ss(J),1)=B(\Ss(J))$.
\end{itemize}
\end{definition}

\begin{example}
	Consider the following antichain in $\F=\F_6^{[1,14]}$: $$\Ss:=\{\{1,2,3,4,13,14\}, \{1,2,6,7,11,12\}, \{2,3,4,5,12,13\}, \{2,3,5,6,10,11\},\{2,3,7,8,9,10\}\}.$$
	By definition,  $$\Ss([1,2])=\{\{3,4,13,14\},\{6,7,11,12\},\{4,5,12,13\},\{7,8,9,10\}\},$$ $$\Ss([2,3])=\{ \{4,5,12,13\}, \{5,6,10,11\}, \{7,8,9,10\}\}, \quad\mathrm{and}\quad \Ss([3,4])=\emptyset.$$
	Furthermore, $B(\Ss([2,3]), 6)$ is a 2-stacked 3-ball generated by the facets $$\{6,7,8,9\}, \{6,7,9,10\}, \{7,8,9,10\}.$$
    Note also that $\Ss([2,7])=\{\emptyset\}$ since $[2,7] \in \F(\Ss)$, but $\Ss([3,8])=\emptyset$ since $[3,8]\notin \F(\Ss)$.
\end{example}
To study complexes of the form $B(\Ss)\backslash B(\Ss-\mathbf{1}_{2k})$,
 it will be helpful to look at complexes of the form $B(\Ss) \backslash B(\T)$ for {\em all} pairs of antichains $\T \prec_p \Ss$. The following lemma, which is an easy consequence of Definition  \ref{main-def}, is the first step in this direction. For the rest of the section, we fix $k\geq 1$, $n\geq 2k$, and we always assume that $\Ss, \T$ are antichains in $\F=\F_{2k}^{[1,n]}$. 

\begin{lemma}\label{lm: basic prop}
	For $i, \ell \geq 1$,
	\begin{enumerate}
		\item $\Ss([i+1,i+2])\leq_p \Ss([i,i+1])$.
		\item $(\Ss-\mathbf{1}_{2k})([i,i+1])=\Ss([i+1,i+2])-\mathbf{1}_{2k-2}$.	
	  	\item If $\T\prec_p \Ss$, then $\T([i, i+2\ell-1])\prec_p \Ss([i+1, i+2\ell])$.
	\end{enumerate}
\end{lemma}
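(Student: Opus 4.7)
All three statements are combinatorial identities about order ideals in $\F_{2k}^{[1,n]}$, and my plan is to prove each by unpacking what $\leq_p$ means when the relevant sets are viewed coordinate-wise as points of $\N^{2k}$ (or $\N^{2(k-\ell)}$).

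For (1), I would take an element in the collection defining $\Ss([i+1,i+2])$ --- that is, a maximal $H \subseteq [i+3,n]$ with $[i+1,i+2]\cup H\in \F(\Ss)$ --- and simply observe that the pair $[i,i+1]$ is coordinate-wise $\leq_p [i+1,i+2]$, so $[i,i+1]\cup H\leq_p [i+1,i+2]\cup H \in \F(\Ss)$, hence $[i,i+1]\cup H\in\F(\Ss)$ as well. Thus $H$ (which certainly lies in $[i+2,n]$) belongs to the collection defining $\Ss([i,i+1])$, so it is dominated by some maximal element there.

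For (2), the point is that the shift $F\mapsto F-\mathbf{1}_{2k}$ on those $F\in\Ss$ with $x_1\geq 2$ induces an order-preserving bijection between the two collections
\[
\{H'\subseteq[i+3,n]:[i+1,i+2]\cup H'\in\F(\Ss)\} \;\longleftrightarrow\; \{H\subseteq[i+2,n]:[i,i+1]\cup H\in\F(\Ss-\mathbf{1}_{2k})\},
\]
given by $H'\mapsto H'-\mathbf{1}_{2k-2}$. I would verify this by writing $F=\{x_1,x_1+1,\ldots,x_k,x_k+1\}$ and $H'=\{y_1,y_1+1,\ldots,y_{k-1},y_{k-1}+1\}$ and checking that the two inequalities $[i+1,i+2]\cup H'\leq_p F$ and $[i,i+1]\cup(H'-\mathbf{1}_{2k-2})\leq_p F-\mathbf{1}_{2k}$ reduce to the same scalar conditions ($x_1\geq i+1$ and $y_j\leq x_{j+1}$ for all $j$). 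One also notes that any element of the right-hand collection automatically sits in $[i+2,n-1]$ (since $F-\mathbf{1}_{2k}$ has last coordinate $\leq n-1$), so the shift is genuinely a bijection. Because it is order-preserving, maximal elements correspond, giving the claimed equality.

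For (3), I would start with $H\in\T([i,i+2\ell-1])$, pull back to $G\in\T$ with $[i,i+2\ell-1]\cup H\leq_p G$, and then to some $F=\{x_1,x_1+1,\ldots,x_k,x_k+1\}\in\Ss$ with $G\leq_p F-\mathbf{1}_{2k}$. Writing $H=\{h_1,h_1+1,\ldots,h_{k-\ell},h_{k-\ell}+1\}$, the composed inequality $[i,i+2\ell-1]\cup H\leq_p F-\mathbf{1}_{2k}$ unpacks to $x_j\geq i+2j-1$ for $1\leq j\leq\ell$ and $x_{\ell+j}\geq h_j+1$ for $1\leq j\leq k-\ell$. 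My candidate witness in $\F_{2(k-\ell)}^{[1,n]}$ is then the ``right half'' of $F$, namely $H':=\{x_{\ell+1},x_{\ell+1}+1,\ldots,x_k,x_k+1\}$. These conditions immediately yield (i) $H'\subseteq[i+2\ell+1,n]$ (since $x_{\ell+1}\geq h_1+1\geq i+2\ell+1$), (ii) $[i+1,i+2\ell]\cup H'\leq_p F$, so $H'$ lies in the collection defining $\Ss([i+1,i+2\ell])$, and (iii) $H\leq_p H'-\mathbf{1}_{2(k-\ell)}$. Dominating $H'$ by a maximal element $H''\in\Ss([i+1,i+2\ell])$ then gives $H\prec_p H''$, proving $\T([i,i+2\ell-1])\prec_p\Ss([i+1,i+2\ell])$. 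The only real obstacle is the bookkeeping: keeping track of which tuple lives in $\F_{2k}$ versus $\F_{2(k-\ell)}$, applying $-\mathbf{1}$ in the correct dimension, and making sure the constructed witness actually sits in the range $[i+2\ell+1,n]$ rather than extending below the cutoff.
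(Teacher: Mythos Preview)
Your arguments for Parts 1 and 2 are correct and amount to exactly what the paper means by ``follow from Definition~\ref{main-def}'': you have simply written out the verification explicitly.

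For Part 3 your proof is correct but proceeds along a different route than the paper's. The paper derives Part 3 \emph{from} Part 2 by induction on $\ell$: the base case $\ell=1$ is the chain
\[
\T([i,i+1])\leq_p (\Ss-\mathbf{1}_{2k})([i,i+1])=\Ss([i+1,i+2])-\mathbf{1}_{2k-2}\prec_p \Ss([i+1,i+2]),
\]
and the inductive step uses the identity $\Ss([i,i+2\ell-1])=\big(\Ss([i,i+1])\big)([i+2,i+2\ell-1])$ to peel off one pair at a time. Your argument, by contrast, is direct for every $\ell$: you chase $H\in\T([i,i+2\ell-1])$ up to some $F\in\Ss$ via $\T\prec_p\Ss$, then exhibit the right-hand block $H'=\{x_{\ell+1},x_{\ell+1}+1,\ldots,x_k,x_k+1\}$ of $F$ as an explicit witness in the collection defining $\Ss([i+1,i+2\ell])$ with $H\prec_p H'$. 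The paper's approach is more modular (it exposes Part 2 as the essential mechanism and makes the later identity $\Ss(J\cup J')=\big(\Ss(J)\big)(J')$ available for reuse), while yours is self-contained and avoids both the induction and any appeal to Part 2. Either way the bookkeeping you flag---that $x_{\ell+1}\geq h_1+1\geq i+2\ell+1$ so that $H'$ really lands in $[i+2\ell+1,n]$---is the only point requiring care, and you handle it correctly.
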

\begin{proof}
	 Parts 1 and 2 follow from Definition \ref{main-def}. 
	Part 3 is a consequence of Part 2. Indeed,  $$\T([i,i+1])\leq_p (\Ss-\mathbf{1}_{2k})([i,i+1])=\Ss([i+1,i+2])-\mathbf{1}_{2k-2}\prec_p \Ss([i+1,i+2]).$$
	Hence, for $\ell=2$,
	\[\T([i, i+3])=(\T([i, i+1]))([i+2, i+3]) \prec_p (\Ss([i+1, i+2]))([i+3, i+4])=\Ss([i+1, i+4]). \]
	The result now follows by induction on $\ell$.
\end{proof}

As the proof of Theorem \ref{thm: prop of B_S} is rather long and technical, it is worth to pause and outline the plan for the proof. Fix $\T\prec_p\Ss$. The first step is to decompose each complex $B(\Ss,i)\backslash B(\T,i)$ into analogous lower-dimensional objects joined with simplices, see Lemma \ref{B(S,i)-decomposition} and Corollary \ref{cor1}. The minimum of each facet $F$ determines which component of this decomposition $F$ is placed in. The second step is to study the intersections of components appearing in this decomposition, see Lemma \ref{lm: D_j cap D_{j+1}}. With these results at our disposal, the last step is to use induction on the dimension to show that each complex $B(\Ss,i)\backslash B(\T,i)$ is a ball with the desired properties, see Lemmas \ref{lm: B_S} and \ref{lem:B_S-neighb}.

\begin{lemma} \label{B(S,i)-decomposition}
The following decomposition holds:
	\begin{eqnarray}
	B(\Ss, i) &=&\bigcup_{j\geq i} \left( B\big(\Ss([j,j+1]), j+2\big)*\overline{[j, j+1]}\right)  \label{eq:line1}\\
	&=& \left(B\big(\Ss([i,i+1]), i+2\big)*\overline{[i, i+1]}\right) \cup B(\Ss, i+1) \nonumber.
	\end{eqnarray}
\end{lemma}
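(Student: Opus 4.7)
\medskip

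\noindent\textbf{Proof plan.} The strategy is to show that both sides of the first equality are pure $(2k-1)$-dimensional simplicial complexes with the same collection of facets; the second equality then follows immediately by splitting the $j=i$ summand out of the union and applying the first equality with $i$ replaced by $i+1$ to the tail $\bigcup_{j\geq i+1}$. Throughout, one keeps in mind the conventions that the void complex joined with anything is void, while the empty complex $\{\emptyset\}$ joined with $\overline{[j,j+1]}$ is $\overline{[j,j+1]}$; these conventions handle the degenerate cases $k=1$ and $[j,j+1]\notin\F(\Ss)$ without incident.

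The core observation is that every element $F$ of $\F_{2k}^{[1,n]}$ has a canonical decomposition. Indeed, by the definition of $\F_{2k}^{[1,n]}$, one can write $F=\{j,j+1\}\cup H$ where $j=\min F$ and $H$ is a facet of $\F_{2(k-1)}^{[j+2,n]}$. Now suppose $F$ is a facet of $B(\Ss,i)$, i.e., $F\in\F(\Ss,i)$. Then $j\geq i$, and Definition~\ref{main-def} tells us that $F\in\F(\Ss)$ precisely when $H$ belongs to the order ideal $\F_{2(k-1)}(\Ss([j,j+1]))$; together with $H\subseteq[j+2,n]$ this says exactly that $H$ is a facet of $B(\Ss([j,j+1]),j+2)$. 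Consequently, $F$ is a facet of the join $B(\Ss([j,j+1]),j+2)*\overline{[j,j+1]}$, which lies in the right-hand side of \eqref{eq:line1}. Conversely, if $F'=\{j,j+1\}\cup H'$ is a facet of the $j$-th summand with $j\geq i$, then $H'$ is a facet of $B(\Ss([j,j+1]),j+2)$; reversing the chain of implications gives $F'\in\F(\Ss,i)$.

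Since both sides are pure of dimension $2k-1$ (for the join this follows from $\dim B(\Ss([j,j+1]),j+2)=2(k-1)-1$ and $\dim\overline{[j,j+1]}=1$, together with $\dim(\Delta*\Gamma)=\dim\Delta+\dim\Gamma+1$), and a pure complex is determined by its facets, the set-level matching of facets above yields the first equality. Finally, isolating the $j=i$ term gives
\[
B(\Ss,i)=\bigl(B(\Ss([i,i+1]),i+2)*\overline{[i,i+1]}\bigr)\ \cup\ \bigcup_{j\geq i+1}\bigl(B(\Ss([j,j+1]),j+2)*\overline{[j,j+1]}\bigr),
\]
and the second union equals $B(\Ss,i+1)$ by the first equality at level $i+1$, establishing the second displayed identity.

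There is no substantive obstacle: the lemma is a direct unwinding of Definition~\ref{main-def}, and the only points requiring care are bookkeeping ones, namely that one respects the empty/void distinction and uses the fact that in $\F_{2k}^{[1,n]}$ every facet begins with a pair of consecutive integers, so that conditioning on $\min F$ naturally produces the $\Ss([j,j+1])$ construction.
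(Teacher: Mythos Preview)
Your proof is correct and follows essentially the same approach as the paper: both arguments establish the first equality by matching facets on the two sides via the canonical decomposition $F=[j,j+1]\cup H$ with $j=\min F$, invoking Definition~\ref{main-def} for the bijection, and then derive the second equality by peeling off the $j=i$ summand. Your write-up is slightly more detailed (e.g., explicitly noting purity and handling the void/empty edge cases), but there is no substantive difference.
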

\begin{proof}
	By definition of $\F_{2(k-1)}(\Ss([j,j+1]), j+2)$, the facets of the complex on the right-hand side of \eqref{eq:line1} are also the facets of the complex on the left-hand side of \eqref{eq:line1}. Conversely, let $G$ be a facet of $B(\Ss, i)$ and let $j$ be the minimal element of $F$. Then $[j,j+1]\subseteq G$ and by definition of $\F(\Ss([j,j+1]))$, $G\backslash\{j, j+1\}\in \F_{2(k-1)}(\Ss([j,j+1]), j+2)$. Thus $G$ is also a facet of the complex on the right-hand side of \eqref{eq:line1}. 
\end{proof}
\begin{corollary}\label{cor1}
	If $\T\prec_p \Ss$, then
	$$B(\Ss, i)\backslash B(\T, i) =\bigcup_{j\geq i} \left(\big( B(\Ss([j,j+1]), j+2)\backslash B(\T([j,j+1]), j+2)\big)*\overline{[j, j+1]}\right).$$
\end{corollary}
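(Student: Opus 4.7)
The plan is to match the facets of the two sides directly, using Lemma \ref{B(S,i)-decomposition} as the backbone. Applied to $\Ss$ and $\T$ separately, that lemma shows every facet of $B(\Ss,i)$ has a unique minimum element $j\geq i$ and factors uniquely as $[j,j+1]\cup H$ with $H\in \F_{2(k-1)}(\Ss([j,j+1]),j+2)$, and analogously for $B(\T,i)$ with $\T([j,j+1])$ in place of $\Ss([j,j+1])$. Since all facets are of the same dimension $2k-1$ and distinct values of $j$ produce facets with distinct minima, facets from different strata never contain one another; hence the complex $B(\Ss,i)\setminus B(\T,i)$ cleanly stratifies by $j$, and the union over $j$ on the right-hand side is likewise stratum-by-stratum.

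The crux is then the facet-level equivalence, for each $j\geq i$ and each $H\in\F_{2(k-1)}(\Ss([j,j+1]),j+2)$:
\[
[j,j+1]\cup H \in \F(\T) \iff H \in \F_{2(k-1)}(\T([j,j+1]),j+2).
\]
The $(\Leftarrow)$ direction uses the defining property of $\T([j,j+1])$: its elements are maximal $G\subseteq [j+2,n]$ with $[j,j+1]\cup G\in \F(\T)$, so if $H\leq_p H'\in\T([j,j+1])$, then $[j,j+1]\cup H\leq_p[j,j+1]\cup H'\in \F(\T)$, and since $\F(\T)$ is an order ideal we get $[j,j+1]\cup H\in \F(\T)$. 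Conversely, if $[j,j+1]\cup H\in \F(\T)$ then $H$ itself belongs to the collection $\{G\subseteq[j+2,n]:[j,j+1]\cup G\in\F(\T)\}$ (since $\min H\geq j+2$ by our assumption on $H$), so $H$ lies below some maximal element of that collection, i.e.\ below some element of $\T([j,j+1])$, yielding $H\in\F_{2(k-1)}(\T([j,j+1]),j+2)$.

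Combining these, the facets of $B(\Ss,i)\setminus B(\T,i)$ with minimum $j$ are exactly the sets $[j,j+1]\cup H$ with
\[
H\in \F_{2(k-1)}(\Ss([j,j+1]),j+2)\setminus\F_{2(k-1)}(\T([j,j+1]),j+2),
\]
which are precisely the facets of $\bigl(B(\Ss([j,j+1]),j+2)\setminus B(\T([j,j+1]),j+2)\bigr)*\overline{[j,j+1]}$. Taking the union over $j\geq i$ gives the claimed identity. The only real subtlety is the facet equivalence above; the stratification argument and handling of edge cases (when $\Ss([j,j+1])$ or $\T([j,j+1])$ is the empty antichain, or equals $\{\emptyset\}$, so that the corresponding join is void or reduces to $\overline{[j,j+1]}$) are just bookkeeping consistent with the conventions fixed in Section 2. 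Note that the hypothesis $\T\prec_p\Ss$ is used only through its weaker consequence $\F(\T)\subseteq \F(\Ss)$, which makes $B(\T,i)$ a subcomplex of $B(\Ss,i)$ so that the notation $B(\Ss,i)\setminus B(\T,i)$ is meaningful.
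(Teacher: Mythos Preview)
Your proof is correct and is essentially what the paper intends: the corollary is stated immediately after Lemma~\ref{B(S,i)-decomposition} with no proof, so the implicit argument is exactly the facet-by-facet stratification by $\min F$ that you carry out. Your observation that only the consequence $\F(\T)\subseteq\F(\Ss)$ of $\T\prec_p\Ss$ is actually needed is also correct.
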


 Our next step is to understand the intersections of components of the decomposition provided by Corollary \ref{cor1}. With this goal in mind, we fix $\Ss$ and $\T$ such that $\T\prec_p \Ss$ and introduce the following definition:
\begin{definition} \label{def:D_j}
Define $D_j$ as  
\[D_j:=\Big( B\big(\Ss([j,j+1]), j+2\big)\backslash B\big(\T([j,j+1]), j+2\big)\Big)*\overline{[j, j+1]}. \] 
In plain English, the complex $D_j$ is generated by all facets of $\F(\Ss)$ that are of the form $[j,j+1]\cup H$ with $H\subseteq [j+2,n]$, and are not facets of $\F(\T)$. Define also $\Gamma_{j,\ell}$ as
$$\Gamma_{j, \ell}:=B(\Ss([j+1,j+2\ell]), j+2\ell+1)\backslash B(\T([j, j+2\ell-1]), j+2\ell+1).$$
That is, the complex $\Gamma_{j, \ell}$ is generated by the facets $H\subseteq [j+2\ell+1, n]$ such that $[j+1, j+2\ell]\cup H\in \F(\Ss)$ but $[j, j+2\ell-1]\cup H\notin \F(\T)$.
\end{definition}

\begin{lemma}  \label{lemma:inters-of-D} If $D_{j+1}$ is not the void complex, then $D_j\cap D_{j+1}$ has the following decomposition according to initial segments of facets:
	\[D_j\cap D_{j+1}=\bigcup_{\ell=1}^{k}\left( \Gamma_{j, \ell}*\overline{[j+1, j+2\ell-1]}\right).\]
\end{lemma}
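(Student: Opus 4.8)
The plan is to analyze a facet $\sigma$ lying in the intersection $D_j\cap D_{j+1}$ by tracking how many consecutive integers starting from $j+1$ it contains. First I would recall from Definition~\ref{def:D_j} that a facet of $D_j$ has the form $[j,j+1]\cup H$ with $H\subseteq[j+2,n]$, $[j,j+1]\cup H\in\F(\Ss)$ and $[j,j+1]\cup H\notin\F(\T)$, while a facet of $D_{j+1}$ has the form $[j+1,j+2]\cup H'$ with $H'\subseteq[j+3,n]$ and analogous membership conditions. A face $\sigma\in D_j\cap D_{j+1}$ is therefore contained in a facet of each, so $\sigma\subseteq[j,j+1]\cup H$ and $\sigma\subseteq[j+1,j+2]\cup H'$; intersecting these two supersets, the only element of $\{j,j+1\}$ that can survive in $\sigma$ is $j+1$ (since $j\notin[j+1,j+2]\cup H'$ as $H'\subseteq[j+3,n]$, and $j+2\notin[j,j+1]\cup H$ only if $j+2\notin H$). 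This already forces $j\notin\sigma$, and the remaining question is exactly how long a run $[j+1,j+2\ell-1]$ sits inside $\sigma$.

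The core combinatorial step is to show that a facet $\sigma$ of $D_j\cap D_{j+1}$ has the form $[j+1,j+2\ell-1]\cup H$ for a unique $\ell$ with $1\le\ell\le k$, where $H\subseteq[j+2\ell+1,n]$ (so that $j+2\ell\notin H$, i.e.\ $[j+1,j+2\ell-1]$ is a maximal run), and then to identify the condition ``$\sigma$ extends to a facet of $D_j$ and to a facet of $D_{j+1}$'' with membership of $H$ in $\Gamma_{j,\ell}$. Concretely: $\sigma$ extends to a facet of $D_j$ means some $[j,j+1]\cup H''\in\F(\Ss)\setminus\F(\T)$ contains $\sigma$; by Gale's evenness description of $\F_{2k}$, the facet of $\F(\Ss)$ of the form $[j,j+1]\cup(\text{stuff})$ that is $\leq_p$-maximal subject to having $[j+1,j+2\ell-1]\cup H$ below it is $[j,j+2\ell]\cup H$ when we demand the run from $j+1$ to continue — here one uses that a facet containing $[j,j+1]$ and also containing $j+2$ must contain $j+3$ (consecutive-pair structure of $\F_{2k}$), hence iterating, containing the run $[j+1,j+2\ell-1]$ plus nothing immediately after forces the ambient facet to be $[j,j+2\ell]\cup H$ with $H\subseteq[j+2\ell+1,n]$. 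So ``$\sigma$ is in the star of a facet of $D_j$'' unwinds to $[j,j+2\ell-1]\cup H\in\F(\Ss)$ together with the non-membership in $\F(\T)$; by Definition~\ref{main-def} this says $H\in\F_{2(k-\ell)}(\Ss([j,j+2\ell-1]))$ but not in $\F_{2(k-\ell)}(\T([j,j+2\ell-1]))$. Wait — I must be careful which index set appears: on the $D_j$ side the run $[j,j+2\ell-1]$ has length $2\ell$ and the $\Ss$-condition uses $\Ss([\,\cdot\,])$ with a length-$2\ell$ initial block, whereas on the $D_{j+1}$ side the relevant block is $[j+1,j+2\ell]$, again of length $2\ell$; reconciling these two is precisely what produces the shifted arguments $\Ss([j+1,j+2\ell])$ and $\T([j,j+2\ell-1])$ appearing in the definition of $\Gamma_{j,\ell}$, and here Lemma~\ref{lm: basic prop}(3) (with its shift by one) is the tool that makes the two descriptions compatible so that $\Gamma_{j,\ell}$ is well-defined as a difference of two complexes.

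Having established the bijection between facets of $D_j\cap D_{j+1}$ and pairs $(\ell, H)$ with $H$ a facet of $\Gamma_{j,\ell}$, the decomposition $D_j\cap D_{j+1}=\bigcup_{\ell=1}^{k}\bigl(\Gamma_{j,\ell}*\overline{[j+1,j+2\ell-1]}\bigr)$ follows by translating this facet-level statement back into a statement about generated subcomplexes: the facet $\sigma=[j+1,j+2\ell-1]\cup H$ is exactly a facet of $\Gamma_{j,\ell}*\overline{[j+1,j+2\ell-1]}$, and conversely every facet of the right-hand side satisfies both defining conditions of $D_j$ and of $D_{j+1}$ (containing $[j,j+1]$ in the ambient $\Ss$-facet $[j,j+2\ell]\cup H$ and containing $[j+1,j+2]$ in the ambient $\Ss$-facet $[j+1,j+2\ell]\cup H$, while the $\T$-non-membership is inherited), so it lies in $D_j\cap D_{j+1}$. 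One should also remark that the union on the right is over those $\ell$ for which $\Gamma_{j,\ell}$ is non-void, and when $D_{j+1}$ is void the statement is vacuous, matching the hypothesis. The step I expect to be the main obstacle is the careful verification — via Gale's evenness condition and the one-step shift of Lemma~\ref{lm: basic prop} — that the ``maximal run length $2\ell$'' of a facet $\sigma$ is simultaneously the correct parameter for both the $D_j$-description (block $[j,j+2\ell-1]$) and the $D_{j+1}$-description (block $[j+1,j+2\ell]$), and that the resulting $\Ss$- and $\T$-conditions are exactly those encoded in $\Gamma_{j,\ell}$; getting the index bookkeeping right here, rather than off by one, is where the real work lies.
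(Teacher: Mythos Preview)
Your outline correctly identifies the overall architecture—decompose by the length of the maximal initial run and match the resulting conditions to the definition of $\Gamma_{j,\ell}$—and your treatment of the inclusion $\bigcup_\ell \Gamma_{j,\ell}*\overline{[j+1,j+2\ell-1]}\subseteq D_j\cap D_{j+1}$ is essentially what the paper does. The genuine gap is in the other inclusion. You assert that a maximal face $\sigma$ of $D_j\cap D_{j+1}$ has the form $[j+1,j+2\ell-1]\cup H$ with $H\subseteq[j+2\ell+1,n]$ and $|H|=2(k-\ell)$, and that $\sigma\cup\{j\}$ (resp.\ $\sigma\cup\{j+2\ell\}$) is a facet of $D_j$ (resp.\ $D_{j+1}$). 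Neither the purity of $D_j\cap D_{j+1}$ in dimension $2k-2$ nor this ``compatible extensions'' property follows just from $j\notin\sigma$ and Gale's evenness. Given an arbitrary facet $G=[j,j+2a-1]\cup M$ of $D_j$ containing $\sigma$ and an arbitrary facet $G'=[j+1,j+2b]\cup M'$ of $D_{j+1}$ containing $\sigma$, there is no a~priori reason that $a=b$, that $M=M'$, or that $G\setminus\{j\}=G'\setminus\{j+2b\}$; your sketch silently replaces ``some pair of ambient facets'' by ``a single facet and its one-step shift.''

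The paper closes this gap by choosing $G$ to be a $\leq_p$-\emph{minimal} facet of $D_j$ containing the maximal face $F$, writing $G=[j,j+2\ell-1]\cup M$, and then proving by contradiction that $\bar G:=(G\setminus\{j\})\cup\{j+2\ell\}$ lies in $D_{j+1}$. The contradiction argument first forces $G\in\Ss$ and then splits on whether $s:=\max(G\setminus F)$ lies in the initial block $[j,j+2\ell-1]$ or in $M$; the second case uses the minimality of $G$ in an essential way (one produces a strictly smaller facet of $D_j$ still containing $F$). Only after this step does one know that $F=G\setminus\{j\}$ has dimension $2k-2$ and that $F\setminus[j+1,j+2\ell-1]$ satisfies both the $\Ss([j+1,j+2\ell])$- and the $\T([j,j+2\ell-1])$-conditions defining $\Gamma_{j,\ell}$. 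Your appeal to Lemma~\ref{lm: basic prop}(3) is placed correctly (it shows that the two intermediate ``difference'' descriptions have a common refinement equal to $\Gamma_{j,\ell}$), but it does not substitute for the missing minimality/contradiction argument.
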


\begin{proof}
By definition, $D_j$ and $D_{j+1}$ are pure $(2k-1)$-dimensional simplicial complexes that do not share common facets. We first show that $D_j\cap D_{j+1}$ is pure $(2k-2)$-dimensional. Let $F$ be a maximal (w.r.t.~inclusion) face of $D_j\cap D_{j+1}$. Let $G$ be a minimal (w.r.t.~$\leq_p$) facet of $D_j$ containing $F$.
Note that $G$ must contain $[j,j+1]$. In addition, since $G\in\F_{2k}$, $G$ is a disjoint union of $k$ pairs of the form $[q, q+1]$. This implies that $G=[j, j+2\ell-1]\cup M$, where $1\leq\ell\leq k$ and $M\in \F_{2(k-\ell)}^{[j+2\ell+1,n]}$. It suffices to show that $\bar{G}:=G\backslash \{j\}\cup \{j+2\ell\}\in D_{j+1}$ and hence $F=G\backslash \{j\} =\bar{G}\backslash\{j+2\ell\}$. 

Suppose, to the contrary, that $\bar{G}\notin D_{j+1}$. Since $G\leq_p \bar{G}$ and $G\notin B(\T)$, it follows that $\bar{G}\notin B(\T)$, and so $\bar{G}\notin B(\Ss)$ (or else, $\bar{G}$ would be in $D_{j+1}$). The fact that $G\in B(\Ss)$ then forces $G$ to be in $\Ss$. (Indeed, if $G$ is in $B(\Ss)$ but not in $\Ss$, then there must  exist  $G'\in\Ss$ such that $G<_p G'$. By definition of $\bar{G}$, such $G'\in B(\Ss)$ satisfies $\bar{G}\leq_p G'$, which is impossible because $\bar{G}\notin B(\Ss)$.)

Let $s:= \max(G\backslash F)$.  If $s\in[j,j+2\ell-1]$, then $M\subseteq F$, which together with $F\in D_{j+1}$ implies that  $\bar{G}=[j+1, j+2\ell] \cup M \in D_{j+1}$, contradicting our assumption. If $s\notin[j,j+2\ell-1]$, we let $H$ be the maximum (w.r.t.~$\leq_p$) facet of $\F_{2k}$ such that $H<_p G$ and $G\backslash\{s\}\subseteq H$. Our assumption that  $j+2\ell\notin G$ and the  definition of $\F_{2k}$ imply that $H$ exists and that it can be expressed as $H=[j, j+2\ell-1]\cup (M\backslash \{s\}) \cup \{t\}$ for some $t$ between $j+2\ell$ and $s-1$. Hence, $\min(H)=j$ and $H<_p G<_p H+\mathbf{1}_{2k}$. The fact that $G\in\Ss$, then implies that $H\in B(\Ss)\backslash B(\Ss-\mathbf{1}_{2k}) \subseteq B(\Ss)\backslash B(\T)$. Thus, $H\in D_j$. This, however, contradicts our choice of $G$ as a minimal facet of $D_j$ containing $F$.

The above discussion shows that any maximal face $F\in D_j\cap D_{j+1}$ is a $(2k-2)$-face with the property that for some $1\leq\ell\leq k$ , 
 $$[j+1,j+2\ell-1] \subseteq F, \quad j\notin F, \quad j+2\ell\notin F,$$
and furthermore $F\cup\{j\}$ is a facet of $D_j$ while $F\cup\{j+2\ell\}$ is a facet of $D_{j+1}$. We conclude that  $F\backslash [j+1, j+2\ell-1]$ is a common facet of complexes
\begin{eqnarray*}
&B(\Ss([j, j+2\ell-1]), j+2\ell+1)\backslash B(\T([j, j+2\ell-1]), j+2\ell+1)& \, \mbox{ and}\\
&B(\Ss([j+1, j+2\ell]), j+2\ell+1)\backslash B(\T([j+1, j+2\ell]), j+2\ell+1).&
\end{eqnarray*}
Since by Part 3 of Lemma~\ref{lm: basic prop}, $B(\T([j,j+2\ell-1]),j+2\ell+1) \prec_p B(\Ss([j+1,j+2\ell]),j+2\ell+1)$, all common facets of the above two complexes, including $F\backslash [j+1,j+2\ell-1]$, are facets of $$B(\Ss([j+1, j+2\ell]), j+2\ell+1)\backslash B(\T([j, j+2\ell-1]), j+2\ell+1)=\Gamma_{j, \ell}.$$
	We infer that $D_j\cap D_{j+1}\subseteq \bigcup_{\ell=1}^{k}\left( \Gamma_{j, \ell}*\overline{[j+1, j+2\ell-1]}\right).$

	For the other inclusion, assume that $H$ is a facet of $\Gamma_{j, \ell}$. We need to show that $$
		[j, j+2\ell-1] \cup H\in D_j \quad \mbox{and} \quad [j+1, j+2\ell]\cup H\in D_{j+1},$$ or, equivalently, that
\begin{eqnarray*}
&[j+2, j+2\ell-1]\cup H\in B(\Ss([j,j+1]), j+2)\backslash B(\T([j,j+1]), j+2)& \, \mbox{ and}\\ &[j+3, j+2\ell]\cup H\in B(\Ss([j+1,j+2]), j+3)\backslash B(\T([j+1,j+2]), j+3).&
\end{eqnarray*}
This follows easily from our assumption that $H\in\Gamma_{j,\ell}$ using the definition of $\Ss([r, r+2\ell-1])$.
\end{proof}

We are now ready to present a much more elegant description of $D_j\cap D_{j+1}$.
\begin{lemma}\label{lm: D_j cap D_{j+1}} If $D_{j+1}$ is not the void complex, then
	$$D_j\cap D_{j+1}=\Big( B(\Ss([j+1,j+2]), j+2)\backslash B(\T([j,j+1]), j+2)\Big) *\overline{\{j+1\}}.$$
\end{lemma}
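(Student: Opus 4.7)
My plan is to reduce the lemma to a cleaner equality by using Lemma~\ref{lemma:inters-of-D} together with the associativity $\overline{[j+1, j+2\ell-1]} = \overline{\{j+1\}} * \overline{[j+2, j+2\ell-1]}$. Factoring out the common $\overline{\{j+1\}}$, the lemma becomes equivalent to
\[
\bigcup_{\ell=1}^{k} \Gamma_{j, \ell} * \overline{[j+2, j+2\ell-1]} \;=\; B\bigl(\Ss([j+1,j+2]), j+2\bigr) \backslash B\bigl(\T([j,j+1]), j+2\bigr).
\]
Both sides are pure $(2k-2)$-dimensional complexes whose facets lie in $\F_{2(k-1)}^{[j+2, n]}$, so I would verify this identity facet by facet.

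The main tool I would establish first is the following equivalence: for any antichain $\A$ of $\F_{2k}^{[1,n]}$, any interval $J = [j', j'+2\ell'-1]$ with $1 \leq \ell' \leq k$, and any $H \in \F_{2(k-\ell')}^{[j'+2\ell', n]}$, one has $H \in \F_{2(k-\ell')}(\A(J))$ iff $J \cup H \in \F(\A)$. The forward direction is immediate from the definition of $\A(J)$. For the backward direction, given $T^* = \{t^*_1, t^*_1+1, \dots, t^*_k, t^*_k+1\} \in \A$ with $J \cup H \leq_p T^*$, I would discard the first $\ell'$ pairs of $T^*$ to form $T' = \{t^*_{\ell'+1}, t^*_{\ell'+1}+1, \dots, t^*_k, t^*_k+1\}$; componentwise bookkeeping shows $t^*_{\ell'+1} \geq j'+2\ell'$, $J \cup T' \leq_p T^*$, and $H \leq_p T'$, so $T'$ lies in the order ideal generated by $\A(J)$, and hence so does $H$.

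Next, every facet $G$ in $\F_{2(k-1)}^{[j+2, n]}$ admits a unique decomposition $G = [j+2, j+2\ell - 1] \cup H$, where $\ell \in \{1, \ldots, k\}$ is the largest integer with $[j+2, j+2\ell-1] \subseteq G$ and $H \in \F_{2(k-\ell)}^{[j+2\ell+1, n]}$ (possibly $H = \emptyset$ when $\ell = k$). By construction, $G$ belongs to the $\ell$-th summand on the left-hand side precisely when $H$ is a facet of $\Gamma_{j,\ell}$, which by the equivalence above amounts to $[j+1, j+2\ell] \cup H \in \F(\Ss)$ together with $[j, j+2\ell-1] \cup H \notin \F(\T)$. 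It remains to match these two conditions to membership of $G$ on the right-hand side.

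The $\T$-matching is immediate: $[j,j+1] \cup G = [j, j+2\ell-1] \cup H$ as elements of $\F_{2k}$, so the equivalence (applied to $\T$ with $J = [j, j+1]$, $\ell' = 1$) gives $G \in \F_{2(k-1)}(\T([j,j+1])) \iff [j, j+2\ell-1] \cup H \in \F(\T)$. The main obstacle is the analogous $\Ss$-matching: the naive union $[j+1, j+2] \cup G$ repeats the vertex $j+2$ when $\ell \geq 2$ and thus is not in $\F_{2k}$. I would sidestep this by introducing the shifted witness $G^* := [j+3, j+2\ell] \cup H \in \F_{2(k-1)}^{[j+3, n]}$, which satisfies $G \leq_p G^*$ componentwise and $[j+1, j+2] \cup G^* = [j+1, j+2\ell] \cup H$; this identification yields the easy direction of the $\Ss$-matching. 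For the reverse direction, starting from $G \leq_p G^{**} \in \Ss([j+1, j+2])$ with $[j+1, j+2] \cup G^{**} \leq_p \tilde G \in \Ss$, the $\F_{2k}$ spacing constraints on $\tilde G$ combined with $\tilde u_1 \geq j+1$ force $\tilde u_r \geq j + 2r - 1$ for all $r \leq \ell$, which is exactly enough to show $[j+1, j+2\ell] \cup H \leq_p \tilde G$ and hence $[j+1, j+2\ell] \cup H \in \F(\Ss)$. Combining with the $\T$-matching identifies the two sides of the equality facet by facet, completing the proof.
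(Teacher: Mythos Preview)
Your argument is correct (modulo the harmless slip that the complexes after factoring out $\overline{\{j+1\}}$ are pure of dimension $2k-3$, not $2k-2$; the facets are correctly identified as elements of $\F_{2(k-1)}^{[j+2,n]}$, which is all that is used). Both your proof and the paper's start from Lemma~\ref{lemma:inters-of-D}, but then diverge. The paper proves the parametrized family of identities
\[
\bigcup_{\ell=m}^{k}\Gamma_{j,\ell}*\overline{[j+1,j+2\ell-1]}
=\Big(B(\Ss([j+1,j+2m]),j+2m)\backslash B(\T([j,j+2m-1]),j+2m)\Big)*\overline{[j+1,j+2m-1]}
\]
by reverse induction on $m$, peeling off the bottom pair $[j+2m,j+2m+1]$ at each step via a decomposition analogous to Lemma~\ref{B(S,i)-decomposition}; the case $m=1$ is the statement. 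You instead go straight to $m=1$ and verify it facet by facet: you isolate a clean general equivalence $H\in\F_{2(k-\ell')}(\A(J))\iff J\cup H\in\F(\A)$, decompose each facet $G$ uniquely by its maximal initial block $[j+2,j+2\ell-1]$, and handle the one nontrivial mismatch (the $\Ss$-side, where $[j+1,j+2]\cup G$ is not a legal element of $\F_{2k}$ when $\ell\geq2$) with the shifted witness $G^*=[j+3,j+2\ell]\cup H$ and a direct componentwise check against the spacing constraints of $\tilde G\in\Ss$. Your route is more elementary and self-contained, and the equivalence you prove is a useful standalone fact about $\A(J)$; the paper's inductive scheme, on the other hand, mirrors the recursive structure used later in Lemma~\ref{lm: B_S} and yields the intermediate identities for all $m$ along the way.
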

\begin{proof}
	It suffices to prove that
	\begin{equation*}
		\begin{split} &\quad \bigcup_{\ell=m}^k\left( \Gamma_{j, \ell}*\overline{[j+1, j+2\ell-1]}\right)\\&=\Big( B(\Ss([j+1, j+2m]), j+2m)\backslash B(\T([j, j+2m-1]), j+2m)\Big)*\overline{[j+1, j+2m-1]}.
		\end{split}
	\end{equation*}
	Indeed, the case $m=1$ of this equation together with Lemma \ref{lemma:inters-of-D} immediately yield the statement.
	
	The proof is by reverse induction on $m$. The base case $m=k$ follows from the fact that $\Ss([j+1,j+2k])=\{\emptyset\}$, while $\T([j,j+2k-1])$ is either $\{\emptyset\}$ or $\emptyset$. In any case, 
	\begin{eqnarray*}
	&B(\Ss([j+1,j+2k]), j+2k+1)= B(\Ss([j+1,j+2k]), j+2k)& \, \mbox{ and} \\
	&B(\T([j,j+2k-1]), j+2k+1)=(\T([j,j+2k-1]), j+2k).
	\end{eqnarray*}

The inductive step is a consequence of the following two observations. (Recall that $\overline{\emptyset}=\{\emptyset\}$.)
	\begin{equation*}
		\begin{split}
			&\quad B(\Ss([j+1, j+2m]), j+2m)\backslash B(\T([j, j+2m-1]), j+2m)\\
			&\stackrel{(*)}{=}\left( \big(B(\Ss([j+1, j+2m+2]), j+2m+2)\backslash B(\T([j,  j+2m+1]), j+2m+2)\big) *\overline{[j+2m, j+2m+1]}\right) \\
			&\quad \cup \Big( B(\Ss([j+1, j+2m]), j+2m+1)\backslash B(\T([j,   j+2m-1]), j+2m+1)\Big)\\
			&\stackrel{(**)}{=}\bigcup_{\ell=m}^k  \left( \Gamma_{j, \ell}*\overline{[j+2m, j+2\ell-1]}\right).
		\end{split}
	\end{equation*}
Here $(**)$ follows from the inductive hypothesis along with the definition of $\Gamma_{j, \ell}$. For $(*)$, note that $H=[j+2m, j+2m+1]\cup G$ is a facet of $B(\Ss([j+1, j+2m]), j+2m)$ if and only if the minimum element of $G$ is at least $j+2m+2$, and furthermore there exists a minimal (w.r.t. $\leq_p$) facet $H'$ such that $H\leq_p H'$ and $[j+1, j+2m]\cup H'\in B(\Ss)$. This $H'$ must be of the form $[j+2m+1, j+2m+2]\cup G'$ for some $G'\geq_p G$. Hence $G'\in B(\Ss([j+1, j+2m+2]), j+2m+3)$ and $G\in B(\Ss([j+1, j+2m+2]), j+2m+2)$. Thus by Lemma \ref{B(S,i)-decomposition}, 
\begin{equation*}
	\begin{split}
		B(\Ss([j+1, j+2m]), j+2m)&=\Big(B(\Ss([j+1, j+2m+2]), j+2m+2) *\overline{[j+2m, j+2m+1]}\Big) \\
		&\quad \cup B(\Ss([j+1, j+2m]), j+2m+1).
	\end{split}
\end{equation*}
This expression, along with the expression for $B(\T([j, j+2m-1]), j+2m)$ given by Lemma \ref{B(S,i)-decomposition}, establishes $(*)$ and completes the proof of the lemma.
\end{proof}

With Corollary \ref{cor1} and Lemma \ref{lm: D_j cap D_{j+1}} at our disposal, we are now in a position to prove the portion of Theorem \ref{thm: prop of B_S} asserting that $B_{\Ss}$ is a $(k-1)$-stacked PL $(2k-1)$-ball. In fact, we prove the following stronger statement.

\begin{lemma}\label{lm: B_S}
If $\T\prec_p \Ss$ and $B(\Ss,i)$ is not the void complex, then $B(\Ss, i)\backslash B(\T, i)$ is a $k$-stacked PL $(2k-1)$-ball. Furthermore, it is $(k-1)$-stacked if $\T=\Ss-\mathbf{1}_{2k}$.
\end{lemma}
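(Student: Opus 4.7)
The plan is to prove both assertions simultaneously by induction on $k$. For the base case $k=1$, the poset $\F_2^{[1,n]}$ is totally ordered, so every nonempty antichain is a singleton $\{[j,j+1]\}$; a direct inspection then shows that $B(\Ss,i)\backslash B(\T,i)$ is a path of consecutive edges, hence a $1$-stacked PL $1$-ball, and that this path collapses to a single edge (a $0$-stacked $1$-simplex) precisely when $\T=\Ss-\mathbf{1}_2$.

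For the inductive step I would begin from the decomposition of Corollary~\ref{cor1},
\[
B(\Ss,i)\backslash B(\T,i)=\bigcup_{j\geq i}D_j,\qquad D_j=X_j*\overline{[j,j+1]},
\]
where $X_j=B(\Ss([j,j+1]),j+2)\backslash B(\T([j,j+1]),j+2)$. Part~3 of Lemma~\ref{lm: basic prop} (applied with $\ell=1$) gives $\T([j,j+1])\prec_p\Ss([j+1,j+2])\leq_p\Ss([j,j+1])$, so the inductive hypothesis (in its generic form) exhibits $X_j$ as a $(k-1)$-stacked PL $(2k-3)$-ball; Part~1 of Lemma~\ref{lm: stackedness} then makes each $D_j$ a $(k-1)$-stacked PL $(2k-1)$-ball. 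The same Part~3 of Lemma~\ref{lm: basic prop}, applied to the formula of Lemma~\ref{lm: D_j cap D_{j+1}}, shows that $D_j\cap D_{j+1}$ is generically a $(k-1)$-stacked PL $(2k-2)$-ball; in the special case $\T=\Ss-\mathbf{1}_{2k}$, Part~2 of Lemma~\ref{lm: basic prop} gives $\T([j,j+1])=\Ss([j+1,j+2])-\mathbf{1}_{2k-2}$, which unlocks the strong form of the inductive hypothesis and upgrades $D_j\cap D_{j+1}$ to a $(k-2)$-stacked PL $(2k-2)$-ball.

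I would then assemble $\bigcup_{j\geq i}D_j$ by attaching the non-void pieces $D_j$ one at a time in order of decreasing $j$, invoking Part~2 of Lemma~\ref{lm: stackedness} at each step. Granting the identification $D_\ell\cap\bigl(\bigcup_{j>\ell}D_j\bigr)=D_\ell\cap D_{\ell+1}$ (and the boundary containment required by that lemma), the stackedness indices combine through the $\max$-rule: $\max\{k-1,k,(k-1)+1\}=k$ after the second attachment in the generic case, and $\max\{k-1,k-1,(k-2)+1\}=k-1$ in the special case, and these maxima are preserved at all subsequent attachments. This matches the two claims of the lemma.

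The principal obstacle is exactly the identification $D_\ell\cap\bigl(\bigcup_{j>\ell}D_j\bigr)=D_\ell\cap D_{\ell+1}$ together with the verification that this intersection lies in $\partial D_\ell\cap\partial\bigl(\bigcup_{j>\ell}D_j\bigr)$. Any face $F\in D_\ell\cap D_m$ with $m\geq\ell+2$ automatically avoids $\{\ell,\ell+1\}$ because it is contained in a facet of $D_m$ whose elements are all $\geq m$; the required inclusion $F\in D_{\ell+1}$ should then follow by ``sliding'' a suitable facet of $\F(\Ss)$ through $F$ down to one whose initial pair is $[\ell+1,\ell+2]$, using Gale's evenness condition together with the order-ideal structure of $\F(\Ss)$ and $\F(\T)$.
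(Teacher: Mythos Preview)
Your proposal follows the same architecture as the paper's proof: induction on $k$, the same base case, the decomposition $\bigcup_j D_j$ from Corollary~\ref{cor1}, the inductive hypothesis applied to each $D_j$ and to $D_j\cap D_{j+1}$ via Lemma~\ref{lm: D_j cap D_{j+1}}, and assembly by repeated use of Lemma~\ref{lm: stackedness}(2). Your treatment of the special case $\T=\Ss-\mathbf{1}_{2k}$ through Lemma~\ref{lm: basic prop}(2) is exactly right, and your $\max$-arithmetic is correct.

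The one place where you stop short is precisely the place you flag as the ``principal obstacle,'' and the paper resolves it more simply than your ``sliding'' sketch suggests. The paper attaches the $D_j$'s in \emph{increasing} order and observes that Part~1 of Lemma~\ref{lm: basic prop} (applied to both $\Ss$ and $\T$) yields directly the chain
\[
D_{j+1}\cap D_{i}\ \subseteq\ D_{j+1}\cap D_{i+1}\ \subseteq\ \cdots\ \subseteq\ D_{j+1}\cap D_{j},
\]
so that $D_{j+1}\cap\bigl(\bigcup_{a\le j}D_a\bigr)=D_{j+1}\cap D_j$. No Gale-evenness ``sliding'' is needed; the monotonicity $\Ss([a+1,a+2])\leq_p\Ss([a,a+1])$ is enough. (The same chain also gives your decreasing-order identity, since $D_m\cap D_\ell\subseteq D_{\ell+1}$ for $m>\ell+1$.) As for the boundary containment required by Lemma~\ref{lm: stackedness}(2): all the $D_j$ sit as full-dimensional subcomplexes of the PL sphere $\partial C_{2k}(n+1)$ and share no facets, so every $(2k-2)$-face of $D_j\cap D_{j+1}$ lies in exactly one facet of each and is therefore a boundary face of each; this point is implicit in the paper as well.
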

\begin{proof}
	The proof is by induction on $k$. For $k=1$, there exist integers $i<j<j'$ such that $B(\Ss, i)$ is generated by edges $\{i,i+1\},\{i+1, i+2\}, \ldots, \{j'-1,j'\}$ while $B(\T,i)$ is generated by edges $\{i,i+1\}, \{i+1,i+2\}, \ldots, \{j-1,j\}$. Hence $B(\Ss, i)\backslash B(\T, i)$ is a path, and so it is indeed a $1$-stacked $1$-ball. If $j=j'-1$ or, equivalently, if $T=S-\mathbf{1}_2$, this path consists of a single edge, and hence it is $0$-stacked.

    For the inductive step, note that by Corollary \ref{cor1}, $B(\Ss, i)\backslash B(\T, i)=\cup_{j\geq i} D_j$. By definition of $D_j$ (see Definition \ref{def:D_j}), Lemma \ref{lm: D_j cap D_{j+1}}, the inductive hypothesis, and Part 1 of Lemma \ref{lm: stackedness}, each $D_j$ is a $(k-1)$-stacked PL $(2k-1)$-ball, while each $D_j\cap D_{j+1}$ is a $(k-1)$-stacked PL $(2k-2)$-ball. Since $\Ss([j+1,j+2])\leq_p \Ss([j,j+1])$ and $\T([j+1,j+2])\leq_p \T([j,j+1])$ for all $j$, it follows 
	from the definition of $D_j$ 
	that $$D_{j+1}\cap D_{i}\subseteq D_{j+1}\cap D_{i+1}\subseteq \dots \subseteq D_{j+1}\cap D_j.$$ Hence $D_{j+1}\cap (D_i\cup D_{i+1}\cup \dots \cup D_{j})=D_j\cap D_{j+1}$. By induction on $\ell-i$ and using Part 2 of Lemma \ref{lm: stackedness}, we infer that $\cup_{i\leq j\leq \ell} D_j$ is a $k$-stacked PL $(2k-1)$-ball. This proves the first claim.
    
    Finally, if $\T=\Ss-\mathbf{1}_{2k}$, then, by inductive assumptions, every intersection $D_j\cap D_{j+1}$ is $(k-2)$-stacked. Hence the same argument as above using Part 2 of Lemma 2.2 yields that $B(\Ss, i)\backslash B(\T, i)$ is $(k-1)$-stacked.
\end{proof}

\begin{remark}
		The proof of Lemma \ref{lm: B_S} also implies that the ball $B(\Ss, i)\backslash B(\T, i)$ is constructible, see \cite[Section 11]{Bjorner95} for the definition and properties of constructibility.
\end{remark}

The part of Theorem \ref{thm: prop of B_S} asserting that if $\partial B_{\Ss}=\partial B_{\Ss'}$, then  $B_{\Ss}=B_{\Ss'}$ is now immediate.

\smallskip\noindent {\it Proof of Theorem \ref{thm: prop of B_S}(2): \ }
Consider a PL $(2k-2)$-sphere  $K:=\partial B_{\Ss}=\partial B_{\Ss'}$. By a result due to McMullen \cite[Theorem 3.3]{Mc-04} (for polytopal spheres) and due to Bagchi and Datta \cite[Theorem 2.12]{BD} (for triangulated spheres), a $(k-1)$-stacked PL ball $B$ that satisfies $\partial B=K$ is unique. The result follows since by Lemma \ref{lm: B_S} both  $B_{\Ss}$ and $B_{\Ss'}$ are $(k-1)$-stacked PL balls.
\hfill$\square$\medskip

To complete the proof of Theorem \ref{thm: prop of B_S}, it only remains to show that if $\Ss$ contains the set $[1,2] \cup [n-2k+3, n]$, then $B_\Ss$ is $(k-1)$-neighborly w.r.t.~$[n]$. To do so, we first count the number of facets of such $B_{\Ss}$. For this part of the proof, it is more convenient to work with the poset $\PP=\PP_k^n$ (introduced at the end of Section 2.3) instead of $\F=\F_{2k}^{[1,n]}$. For an antichain $\A$ of $\mathcal{P}$, define $$\A-\mathbf{1}_k=\{x-\mathbf{1}_k: x=(x_1, x_2, \dots, x_k)\in \A, \;x_1>1\} \quad \mbox{and} \quad \PP_{\A}=\PP(\A) \backslash \PP(\A-\mathbf{1}_k).$$ Note that the isomorphism $R: \PP\to \F$ commutes with subtracting the all-ones vector: $R(\A-\mathbf{1}_{k})=R(\A)-\mathbf{1}_{2k}$ and $R^{-1}(\Ss-\mathbf{1}_{2k})=R^{-1}(\Ss)-\mathbf{1}_{k}$.

\begin{lemma}\label{lm: counting facets}
	Let $\A$ be an antichain of $\mathcal{P}$ that contains $G=(1, n-2k+2, \dots, n-k)$. Then $|\mathcal{P}_{\{G\}}|=|\mathcal{P}_{\A}|=\binom{n-k-1}{k-1}$. Equivalently, if $\Ss$ is an antichain of $\mathcal{F}$ that contains $R(G)=[1,2]\cup[n-2k+3,n]$, then the number of facets of $B_\Ss$ is $\binom{n-k-1}{k-1}$.
\end{lemma}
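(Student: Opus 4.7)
\proof
The plan is a bijective reduction to counting elements of $\PP(\A)$ whose first coordinate equals $1$. First I would verify, for every antichain $\A \subseteq \PP$, that the shift $y \mapsto y + \mathbf{1}_k$ is a bijection from $\PP(\A - \mathbf{1}_k)$ onto $\{z \in \PP(\A) : z_1 > 1\}$. In one direction, if $y \leq_p a - \mathbf{1}_k$ for some $a \in \A$ with $a_1 > 1$, then $y + \mathbf{1}_k \in \PP$ (the bound $y_k + 1 \leq a_k \leq n-k$ is what keeps the image inside $\PP$) and $y + \mathbf{1}_k \leq_p a$. In the other direction, if $z \in \PP(\A)$ with $z_1 \geq 2$ and $z \leq_p a$ for some $a \in \A$, then $a_1 \geq z_1 \geq 2$, so $a - \mathbf{1}_k \in \A - \mathbf{1}_k$ and $z - \mathbf{1}_k \leq_p a - \mathbf{1}_k$ in $\PP$. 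The two maps are mutually inverse, so
\[
|\PP_\A| \;=\; |\PP(\A)| - |\PP(\A - \mathbf{1}_k)| \;=\; |\{x \in \PP(\A) : x_1 = 1\}|.
\]

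Next, I would observe that $G = (1, n-2k+2, \ldots, n-k)$ is the coordinatewise-largest element of $\PP$ whose first coordinate equals $1$: any $x \in \PP$ satisfies $x_i \leq x_k - (k-i) \leq n - 2k + i = G_i$. Consequently, if $G \in \A$, then every $x \in \PP$ with $x_1 = 1$ lies below $G$ and therefore belongs to $\PP(\A)$. This identifies the set counted above with $\{x \in \PP : x_1 = 1\}$, which is independent of $\A$; applying the observation both to $\A = \{G\}$ and to an arbitrary antichain $\A$ containing $G$ yields $|\PP_{\{G\}}| = |\PP_\A|$.

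To finish, an element $x \in \PP$ with $x_1 = 1$ is determined by its remaining coordinates $x_2 < \cdots < x_k$, which form an arbitrary $(k-1)$-subset of $\{2, 3, \ldots, n-k\}$; there are $\binom{n-k-1}{k-1}$ such subsets. The equivalent statement for antichains of $\F$ and facets of $B_\Ss$ then follows because $R$ is a poset isomorphism commuting with the $\mathbf{1}$-shift, and the facets of $B_\Ss$ are precisely $\F(\Ss) \setminus \F(\Ss - \mathbf{1}_{2k})$. No step presents a serious obstacle; the only point requiring attention is the restriction of the shift map, which is controlled by the boundary conditions $x_1 \geq 1$ and $x_k \leq n-k$ built into the definition of $\PP$.
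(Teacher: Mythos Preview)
Your argument is correct. Both proofs ultimately rest on the same diagonal-shift idea, but they package it differently. The paper constructs an explicit bijection $L:\PP_\A\to\PP_{\{G\}}$, $x\mapsto x-(x_1-1)\mathbf{1}_k$, and then checks injectivity and surjectivity separately; the surjectivity argument in particular requires locating, on each diagonal line $\{(1,x_2,\ldots,x_k)+t\mathbf{1}_k\}$, the largest $t$ for which the point still lies in $\PP(\A)$. You instead set up a single bijection $\PP(\A-\mathbf{1}_k)\to\{z\in\PP(\A):z_1>1\}$ via the shift $y\mapsto y+\mathbf{1}_k$, and obtain $|\PP_\A|=|\{x\in\PP(\A):x_1=1\}|$ by subtraction. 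Your route is a bit shorter and avoids the case analysis for surjectivity; the paper's route has the advantage of giving an explicit matching between $\PP_\A$ and $\PP_{\{G\}}$, which could be useful if one wanted finer structural information. Either way the final count is the same set $\{x\in\PP:x_1=1\}$, enumerated identically.
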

\begin{proof}
	First note that 
	$$\mathcal{P}_{\{G\}}=\mathcal{P}(\{G\})=\{(1, x_2, x_3, \dots, x_k): 1<x_2<x_3<\dots<x_k\leq n-k\}.$$
	Thus $|\mathcal{P}_{\{G\}}|=\binom{n-k-1}{k-1}$. 
	
	It remains to show that for a fixed antichain $\A$ that contains $G$, $|\mathcal{P}_{\{G\}}|=|\mathcal{P}_{\A}|$. This will be done once we show that the following map is a bijection:
	\begin{equation*}
		\begin{split}
			L: \mathcal{P}_\A &\to \mathcal{P}_{\{G\}}\\
			x=(x_1, \dots, x_k)&\mapsto x-(x_1-1)\cdot \mathbf{1}_{k}=(1, x_2-x_1+1, x_3-x_1+1, \dots, x_k-x_1+1).
		\end{split}
	\end{equation*}
	
	To see that $L$ is one-to-one, observe that for any $x, y\in \mathcal{P}$ such that $y=x+a\cdot \mathbf{1}_k$ for some $a\geq 1$, only $x$ or $y$ can be in $\mathcal{P}_\A$ but not both. Indeed, if $y\in \mathcal{P}_\A$, then $y\in \mathcal{P}(\A)$. Thus  $x\in \mathcal{P}(\A-\mathbf{1}_{k})$, and hence $x\notin \mathcal{P}_\A$. To see that $L$ is onto, note that any element of $\mathcal{P}$ that is of the form $(1, x_2, x_3, \dots, x_k)$ is in $\mathcal{P}_{\{G\}}\subseteq \mathcal{P}(\A)$. Consider the smallest $a\geq 1$ such that $(1, x_2, x_3, \dots, x_k)+a\cdot \mathbf{1}_k\notin \mathcal{P}(\A)$ (it exists since for $a$ sufficiently large, $(1, x_2, \dots, x_k)+ a\cdot \mathbf{1}_k$ is not even in $\mathcal{P}$). Then $(1, x_2, \dots, x_k)+(a-1)\cdot \mathbf{1}_k\in \mathcal{P}_\A$, and its image under $L$ is  $(1, x_2, \dots, x_k)$.
\end{proof}

The neighborliness of $B_\Ss$ now follows easily:
\begin{lemma} \label{lem:B_S-neighb}
Let $\Ss$ be an antichain in $\F$ that contains the set $[1,2] \cup [n-2k+3,n]$. Then $B_\Ss$ is $(k-1)$-neighborly w.r.t.~$[n]$.
\end{lemma}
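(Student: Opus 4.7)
The plan is to combine the structural properties of $B_\Ss$ established so far with the general inequality $h_i(B)\leq \binom{v-2k+i-1}{i}$ for a PL $(2k-1)$-ball $B$ with $v$ vertices (Theorem~\ref{h-vectors-properties}(1)), and squeeze the $h$-vector between an upper bound and the face count from Lemma~\ref{lm: counting facets}. Concretely, let $v$ denote the number of vertices of $B_\Ss$; since the vertex set of $B_\Ss$ is a subset of $[n]$, we have $v\le n$. By Lemma~\ref{lm: B_S} (applied to $\T=\Ss-\mathbf{1}_{2k}\prec_p\Ss$), $B_\Ss$ is a $(k-1)$-stacked PL $(2k-1)$-ball, so Theorem~\ref{h-vectors-properties}(3) gives $h_i(B_\Ss)=0$ for all $i\geq k$.

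Next, I would count facets two ways. On one hand, $\sum_{i=0}^{2k}h_i(B_\Ss)=f_{2k-1}(B_\Ss)=\binom{n-k-1}{k-1}$ by Lemma~\ref{lm: counting facets}. On the other hand, combining the stacking vanishing with the upper bounds from Theorem~\ref{h-vectors-properties}(1) yields
\[
\binom{n-k-1}{k-1}=\sum_{i=0}^{k-1}h_i(B_\Ss)\le \sum_{i=0}^{k-1}\binom{v-2k+i-1}{i}=\binom{v-k-1}{k-1},
\]
where the last equality is the hockey-stick identity $\sum_{i=0}^{k-1}\binom{a+i}{i}=\binom{a+k}{k-1}$ applied with $a=v-2k-1$. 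Since $\binom{\cdot}{k-1}$ is monotone and $v\le n$, the chain of inequalities forces $v=n$ and $h_i(B_\Ss)=\binom{n-2k+i-1}{i}$ for every $0\le i\le k-1$.

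Finally, Theorem~\ref{h-vectors-properties}(2) translates these equalities into $(k-1)$-neighborliness of $B_\Ss$ with respect to its vertex set; since $v=n$, that vertex set is all of $[n]$, which gives the claim.

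There is no real obstacle here: all the heavy lifting (the ball property, stackedness, and the facet count) is already packaged in Lemmas~\ref{lm: B_S} and~\ref{lm: counting facets}. The only point that requires a brief check is the hockey-stick identity, and a minor sanity remark that for the values of $v$ that occur (necessarily $v\ge 2k$, since $B_\Ss$ is a $(2k-1)$-ball) the binomial coefficients $\binom{v-2k+i-1}{i}$ used in the bound are non-negative and the identity applies; otherwise the conclusion $v=n$ is immediate from $B_\Ss$ containing the facet $[1,2]\cup[n-2k+3,n]\in\Ss$ together with a facet of the form $\{1,2,\ldots,2k\}\leq_p[1,2]\cup[n-2k+3,n]$ that also lies in $B_\Ss\subseteq B(\Ss)$.
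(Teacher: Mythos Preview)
Your argument is correct and follows the same route as the paper: use Lemma~\ref{lm: B_S} to get $(k-1)$-stackedness (hence $h_i=0$ for $i\ge k$), use Lemma~\ref{lm: counting facets} to get $f_{2k-1}=\binom{n-k-1}{k-1}$, compare with the upper bounds of Theorem~\ref{h-vectors-properties}(1) via the hockey-stick identity, and conclude equality term-by-term, invoking Theorem~\ref{h-vectors-properties}(2). You are in fact a bit more careful than the paper in that you explicitly deduce $v=n$ from the squeeze, whereas the paper silently writes the bounds with $n$; your extra step with $v$ and the monotonicity of $\binom{\cdot}{k-1}$ (for $k\ge 2$) is a clean way to close that loop.

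One small caveat about your closing parenthetical: the alternative route you sketch for $v=n$ does not work as written. While $[1,2]\cup[n-2k+3,n]\in\Ss$ is indeed a facet of $B_\Ss$, the facet $[1,2k]$ is generally \emph{not} in $B_\Ss$: it is the minimum element of $\F$, so as soon as $\Ss-\mathbf{1}_{2k}\neq\emptyset$ it lies in $B(\Ss-\mathbf{1}_{2k})$ and is removed. Moreover, even if both facets were present, together they only cover the $4k-2$ vertices $\{1,\dots,2k\}\cup\{n-2k+3,\dots,n\}$, not all of $[n]$. This does not affect your main argument, which already establishes $v=n$; just drop that last aside.
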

\begin{proof}
By Lemma \ref{lm: counting facets}, $f_{2k-1}(B_\Ss)= \binom{n-k-1}{k-1}$. Also, by Lemma \ref{lm: B_S},  $B_\Ss$ is a $(k-1)$-stacked PL $(2k-1)$-ball, and so $h_i(B_\Ss)=0$ for all $k\leq i\leq 2k$ (see Theorem \ref{h-vectors-properties}(3)). Thus
\[
\binom{n-k-1}{k-1}=f_{2k-1}(B_\Ss)=\sum_{i=0}^{2k} h_i(B_\Ss)=\sum_{i=0}^{k-1}h_i(B_\Ss).
\]
Since $\binom{n-k-1}{k-1}=\sum_{i=0}^{k-1} \binom{n-2k+i-1}{i}$ and since by Theorem \ref{h-vectors-properties}(1), $h_i(B_\Ss)\leq \binom{n-2k+i-1}{i}$ for all $i$, it follows that 
$h_i(B_\Ss)=\binom{n-2k+i-1}{i}$ for all $i\leq k-1$, which in turn implies that $B_\Ss$ is $(k-1)$-neighborly w.r.t~$[n]$ (see Theorem \ref{h-vectors-properties}(2)).
\end{proof}

\noindent This concludes the proof of Theorem \ref{thm: prop of B_S}.

\section{The number of neighborly $(d-1)$-spheres on $n$ vertices}
In this section we prove our main result, Theorem \ref{main-thm}, asserting that $sn(d, n)\geq 2^{\Omega(n^{\lfloor (d-1)/2)\rfloor}}$. The following lemma provides an inductive method that given a neighborly sphere generates a new neighborly sphere with one additional vertex. This result is known and was used extensively to construct neighborly complexes. We refer to \cite{Padrol-13,Shemer} for a similar method (known as the sewing method) that was used to construct neighborly polytopes and neighborly oriented matroids; see also \cite[Lemma 3.1]{N-Z} for an analogous statement in the centrally symmetric case.

\begin{lemma}\label{lm: neighborly n+1 ver}
	Let $\Delta$ be a $\lfloor d/2\rfloor$-neighborly PL $(d-1)$-sphere on the vertex set $[n]$. Let $B$ be a $(\lfloor d/2\rfloor-1)$-neighborly (w.r.t.~$[n]$) and $(\lfloor d/2\rfloor-1)$-stacked PL $(d-1)$-ball contained in $\Delta$. Then the complex $\Delta(B)$ obtained from $\Delta$ by replacing $B$ with $\partial B*\overline{\{n+1\}}$ is a $\lfloor d/2\rfloor$-neighborly PL $(d-1)$-sphere on $[n+1]$.
\end{lemma}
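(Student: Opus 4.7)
Let me set $s = \lfloor d/2\rfloor$ throughout, so that $B$ is $(s-1)$-neighborly w.r.t.~$[n]$ and $(s-1)$-stacked, and $\Delta$ is $s$-neighborly. The plan is to first verify that $\Delta(B)$ is a PL $(d-1)$-sphere, and then check the two cases for $s$-neighborliness depending on whether or not the candidate face contains the new vertex $n+1$.

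For the sphere structure: since $B$ is a PL $(d-1)$-ball inside the PL $(d-1)$-sphere $\Delta$, the result of Hudson quoted in Section 2.1 implies that $\Delta\setminus B$ is itself a PL $(d-1)$-ball, with boundary $\partial(\Delta\setminus B) = \partial B$. The cone $\partial B * \overline{\{n+1\}}$ is also a PL $(d-1)$-ball with boundary $\partial B$. Hence $\Delta(B) = (\Delta\setminus B) \cup (\partial B * \overline{\{n+1\}})$ is obtained by gluing two PL $(d-1)$-balls along their common boundary PL $(d-2)$-sphere, and is therefore a PL $(d-1)$-sphere on the vertex set $[n+1]$.

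For $s$-neighborliness, let $\tau\subseteq [n+1]$ with $|\tau|\leq s$. The key numerical input is the inequality $d-s \geq s$, which holds because $s=\lfloor d/2\rfloor$. I consider two cases. If $n+1\notin\tau$, then $\tau\subseteq[n]$ is a face of $\Delta$ by $s$-neighborliness. Since $B$ is $(s-1)$-stacked, Theorem~\ref{h-vectors-properties}(3) (or directly the definition) shows that every interior face of $B$ has dimension at least $d-s \geq s$, so every face of $B$ of dimension $\leq s-1$ lies in $\partial B$. In particular $\tau$, if it lies in $B$, lies in $\partial B\subseteq \Delta\setminus B\subseteq \Delta(B)$; otherwise $\tau$ is already a facet contribution to $\Delta\setminus B$. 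Either way $\tau\in\Delta(B)$. If $n+1\in\tau$, write $\tau = F\cup\{n+1\}$ with $F\subseteq[n]$ and $|F|\leq s-1$. By $(s-1)$-neighborliness of $B$ w.r.t.~$[n]$, the set $F$ is a face of $B$. Since $\dim F\leq s-2 \leq d-s-1$ and $B$ is $(s-1)$-stacked, $F$ lies in $\skel_{d-s-1}(B)=\skel_{d-s-1}(\partial B)$, so $F\in\partial B$. Consequently $\tau = F\cup\{n+1\}\in \partial B*\overline{\{n+1\}}\subseteq \Delta(B)$.

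I do not anticipate any serious obstacle: the PL structure step is a standard gluing argument using results already invoked in the paper, and the neighborliness step is a clean case analysis that uses both hypotheses on $B$ (neighborliness supplies the low-dimensional faces, stackedness forces them to lie on the boundary, which is exactly what one needs to cone off with $n+1$). The one subtlety worth stressing in the write-up is the double role of the $(s-1)$-stacked hypothesis: in Case~1 it guarantees that no low-dimensional face of $\Delta$ is destroyed by excising $B$, and in Case~2 it guarantees that the low-dimensional faces supplied by $(s-1)$-neighborliness of $B$ actually sit on $\partial B$ and are thus available to be joined to $n+1$.
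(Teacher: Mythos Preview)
Your proof is correct and follows essentially the same approach as the paper's. The paper phrases the neighborliness check via the two skeleton equalities $\skel_{s-1}(\Delta\backslash B)=\skel_{s-1}(\Delta)$ and $\skel_{s-2}(\partial B)=\skel_{s-2}(B)$, while you unpack these into an explicit case analysis on whether $n+1\in\tau$; the content is identical.
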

\begin{proof}
	First note that $B$ and $\partial B* \overline{\{n+1\}}$ are PL $(d-1)$-balls with the same boundary. Hence $\Delta\backslash B$ is a PL $(d-1)$-ball and $\Delta(B)=(\Delta\backslash B)\cup (\partial B*\overline{\{n+1\}})$ is a PL $(d-1)$-sphere. Moreover, since $B$ is $(\lfloor d/2\rfloor-1)$-stacked, it follows that 
	\begin{eqnarray*}
	&\skel_{\lfloor d/2\rfloor -1}(\Delta\backslash B)=\skel_{\lfloor d/2\rfloor -1}(\Delta), \quad \mbox{ and }&\\
	&\skel_{\lfloor d/2\rfloor -2}(\lk(n+1, \Delta(B)))=\skel_{\lfloor d/2\rfloor -2}(\partial B)=\skel_{\lfloor d/2\rfloor -2}(B).&
	\end{eqnarray*}
The fact that $\Delta$ is $\lfloor d/2\rfloor$-neighborly and $B$ is $(\lfloor d/2\rfloor-1)$-neighborly w.r.t.~$[n]$ then shows that $\Delta(B)$ is $\lfloor d/2\rfloor$-neighborly w.r.t.~$[n+1]$.
\end{proof}
\begin{theorem}\label{thm: counting relative squeezed balls}
	Let $k\geq 2$. The number of distinct labeled $(k-1)$-neighborly (w.r.t.~$[n]$) and $(k-1)$-stacked PL $(2k-1)$-balls that are contained in $\partial C_{2k}(n)$ is at least $2^{\Omega(n^{k-1})}$.
\end{theorem}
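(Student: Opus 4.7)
The plan is to apply Theorem~\ref{thm: prop of B_S} to every non-empty antichain $\Ss$ in $\F=\F_{2k}^{[1,n]}$ that contains the set $F_0:=[1,2]\cup[n-2k+3,n]$: the theorem guarantees that each such $B_\Ss$ is a $(k-1)$-neighborly, $(k-1)$-stacked PL $(2k-1)$-ball on $n$ vertices. Since every element of $\F$ consists of $k$ disjoint pairs of consecutive integers, Gale's evenness condition shows that each such element is a facet of $\partial C_{2k}(n)$; thus $B(\Ss)\subseteq \partial C_{2k}(n)$, and a fortiori $B_\Ss\subseteq \partial C_{2k}(n)$. So every such $B_\Ss$ is a candidate ball of the required type, and it remains to count how many distinct $B_\Ss$ we obtain.

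For the counting step, I would pass to the poset $\PP=\PP_k^n$ via the isomorphism $R$; the image of $F_0$ is $G_0:=(1,n-2k+2,n-2k+3,\dots,n-k)$. Let $\PP'$ denote the subposet of $\PP$ consisting of tuples $(y_1,\dots,y_k)$ with $y_1\geq 2$ and $y_k\leq n-k-1$. Since $y_1>1=(G_0)_1$ and $y_k<n-k=(G_0)_k$ for any $y\in\PP'$, every element of $\PP'$ is incomparable to $G_0$; consequently $\A\cup\{G_0\}$ is an antichain in $\PP$ whenever $\A$ is an antichain in $\PP'$. The coordinate shift $(y_1,\dots,y_k)\mapsto(y_1-1,\dots,y_k-1)$ furnishes a poset isomorphism $\PP'\cong\PP_k^{n-2}$, so by Theorem~\ref{thm: counting squeezed balls} there are at least $2^{\Omega(n^{k-1})}$ antichains in $\PP'$, hence at least $2^{\Omega(n^{k-1})}$ distinct antichains in $\PP$ containing $G_0$.

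The main obstacle will be verifying that distinct antichains $\Ss\neq\Ss'$ (both containing $F_0$) yield distinct balls $B_\Ss\neq B_{\Ss'}$. My approach is to show that every element $x\in\Ss$ is itself a facet of $B_\Ss$: since $\Ss$ is an antichain and $x+\mathbf{1}_{2k}\not\leq_p x$, no $y\in\Ss$ can satisfy $y\geq_p x+\mathbf{1}_{2k}$ (such a $y$ would dominate $x$, forcing $y=x$, a contradiction), so $x\notin\F(\Ss-\mathbf{1}_{2k})$, and thus $x$ is a facet of $B_\Ss$. Consequently, $B_\Ss=B_{\Ss'}$ implies $\Ss\subseteq\F(\Ss')$ and, symmetrically, $\Ss'\subseteq\F(\Ss)$; taking maximal elements of the resulting equality $\F(\Ss)=\F(\Ss')$ yields $\Ss=\Ss'$. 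Combining all three steps produces at least $2^{\Omega(n^{k-1})}$ distinct labeled $(k-1)$-neighborly, $(k-1)$-stacked PL $(2k-1)$-balls inside $\partial C_{2k}(n)$, as required.
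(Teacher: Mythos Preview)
Your proof is correct and follows essentially the same approach as the paper's: invoke Theorem~\ref{thm: prop of B_S} for antichains containing $F_0$, argue that distinct antichains yield distinct balls because each $x\in\Ss$ survives as a facet of $B_\Ss$ (the paper phrases this as ``the set of $\leq_p$-maximal facets of $B_\Ss$ is exactly $\Ss$''), and count antichains containing $F_0$ by restricting to a subposet on which every element is incomparable to $F_0$. The only cosmetic differences are that you work in $\PP$ rather than $\F$ and that your subposet $\PP'\cong\PP_k^{n-2}$ is slightly larger than the paper's $\F_{2k}^{[3,n-2k+2]}\cong\F_{2k}^{[1,n-2k]}$; both choices yield the same $2^{\Omega(n^{k-1})}$ bound.
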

\begin{proof}
	By Theorem \ref{thm: prop of B_S}, $B_\Ss$ is a $(k-1)$-neighborly (w.r.t.~$[n]$) and $(k-1)$-stacked PL $(2k-1)$-ball in $\partial C_{2k}(n)$ for each antichain $\Ss$ in $\mathcal{F}$ that contains the set $[1,2]\cup [n-2k+3,n]$. All these balls are distinct labeled balls since their sets of maximal facets (w.r.t.~$\leq_p$) are exactly the antichains $\Ss$. The number of such balls is the number of antichains containing $[1,2]\cup [n-2k+3,n]$, which is at least as large as the number of antichains in $\F_{2k}^{[3,n-2k+2]}$. As $\F_{2k}^{[3,n-2k+2]}$ is isomorphic to $\F_{2k}^{[1,n-2k]}$, the number of such antichains is at least $2^{\Omega((n-2k)^{k-1})}=2^{\Omega(n^{k-1})}$ by Theorem \ref{thm: counting squeezed balls}.	
\end{proof}

We are finally ready to prove our main result, Theorem \ref{main-thm}, asserting that for $d \geq 5$, the number of combinatorial types of $\lfloor d/2\rfloor$-neighborly $(d-1)$-spheres on $n$ labeled vertices is at least $2^{\Omega(n^{\lfloor (d-1)/2\rfloor})}$.

\smallskip\noindent {\it Proof of Theorem \ref{main-thm}: \ }
Consider the family $\mathcal{H}$ of $(k-1)$-neighborly (w.r.t.~$[n]$) and $(k-1)$-stacked PL $(2k-1)$-balls contained in $\Delta:=\partial C_{2k}(n)$. In the case of $d=2k$, apply Lemma \ref{lm: neighborly n+1 ver} to $\Delta$ and a ball $B$ in this family to obtain the complex $\Delta(B)$ that is a $k$-neighborly PL $(2k-1)$-sphere on $[n+1]$. These spheres $\Delta(B)$ are pairwise distinct because their restrictions to the vertex set $[n]$ are exactly the complexes $\Delta\backslash B$,  and these are pairwise distinct. The result then follows from Theorem~\ref{thm: counting relative squeezed balls}.
	
	In the case of $d=2k-1$, consider the boundary complex of $B$ for each $B\in\mathcal{H}$. Since $B$ is a $(k-1)$-stacked PL $(2k-1)$-ball, all faces of $B$ of  dimension $\leq k-2$ are in $\partial B$. The fact that $B$ is $(k-1)$-neighborly then guarantees that $\partial B$ is a $(k-1)$-neighborly PL $(2k-2)$-sphere. Furthermore, since the boundary complex of a $(k-1)$-stacked PL $(2k-1)$-ball uniquely determines that ball \cite[Theorem 2.12]{BD}, distinct elements of $\mathcal{H}$  have distinct boundary complexes. The lower bound again follows from Theorem \ref{thm: counting relative squeezed balls}.
\hfill$\square$\medskip

\begin{remark}
     For $d\geq 5$, the number of combinatorial types of \emph{unlabeled} $\lfloor d/2\rfloor$-neighborly $(d-1)$-spheres on $n$ vertices is also at least $2^{\Omega(n^{\lfloor (d-1)/2\rfloor})}$. This is because dividing the lower bound by  $n! = 2^{O(n\log n)}$ does not affect its asymptotic growth if $d\geq 5$.
\end{remark}

We end the paper with an open problem. By the results of \cite{Kal} and \cite{Lee00}, both squeezed balls and squeezed spheres are shellable. It is natural to ask whether relative squeezed balls and spheres are also shellable. More generally, we pose the following problem.
\begin{question}
	Let $k\geq 1$. Let $\T\prec_p \Ss$ be non-empty antichains in $\mathcal{F}_{2k}^{[1,n]}$. Is $B(\Ss)\backslash B(\T)$ shellable? Is $\partial (B(\Ss)\backslash B(\T))$ shellable?
\end{question}
By Corollary \ref{cor1}, we write $B(\Ss)\backslash B(\T)$ as $\cup_{j=1}^\ell D_j$. In the first nontrivial case $k=2$, a shelling order for $B(\Ss)\backslash B(\T)$ can be given as follows: $$(F_{1,1}, \dots, F_{1, m_1}, F_{2, 1}, \dots, F_{2, m_2}, \dots, F_{\ell, 1}, \dots, F_{\ell, m_\ell}),$$
where $(F_{i, 1}, \dots, F_{i, m_i})$ is the unique shelling order of $D_i$ induced by the {\em reverse} partial order on the path $B(\Ss[i, i+1], i+2)\backslash B(\T([i, i+1]), i+2)$.

		



	\bibliography{refs}
	\bibliographystyle{plain}
\end{document}